\newcommand{\define}[1]{\textup{\textbf{\boldmath{#1}}}}
\renewcommand{\Im}{\mathrm{Im}} 
\newcommand{\bbR}{\mathbb{R}} 
\renewcommand{\to}{\rightarrow} 
\newcommand{\into}{\hookrightarrow} 
\newcommand{\iso}{\cong} 
\renewcommand{\equiv}{\simeq} 
\newcommand{\id}{\mathrm{id}}
\newcommand{\VR}{\mathsf{VR}}
\newcommand{\DR}{\mathsf{DR}}
\newcommand{\dg}{d_{\mathrm{g}}}
\newcommand{\dE}{d_{\mathrm{E}}}
\newcommand{\rel}{\mathrm{rel} \,}
\newcommand{\innerrad}{R}
\newcommand{\outerrad}{Q}
\newcommand{\weight}{w}
\newcommand{\annulus}{A_{\innerrad, \outerrad}}
\newcommand{\weightannulus}[3]{\mathcal{A}(#1, #2, #3)}
\newcommand{\wannulus}{\weightannulus{\innerrad}{\outerrad}{\weight}}
\newcommand{\wannuluszero}{\weightannulus{\innerrad}{\outerrad}{0}}
\newcommand{\wann}{\mathcal{A}}
\newcommand{\sdense}{a}
\newcommand{\ldense}{b}
\newcommand{\Psimap}{H^{\scriptscriptstyle\VR}}
\title{The Degree-Rips Complexes of an Annulus with Outliers}
\author{Alexander Rolle}{Department of Mathematics, 
Technical University of Munich, Germany}{}{}{}
\authorrunning{A. Rolle} 
\keywords{multi-parameter persistent homology, stability, homology inference} 
\begin{document}

\maketitle

\begin{abstract}
The degree-Rips bifiltration is the most computable of the 
parameter-free, density-sensitive bifiltrations in topological data analysis. 
It is known that this construction is stable to small perturbations of the input data, 
but its robustness to outliers is not well understood. 
In recent work, Blumberg--Lesnick prove a result in this direction using the Prokhorov distance and homotopy interleavings. 
Based on experimental evaluation, they argue that a more refined approach is desirable, 
and suggest the framework of homology inference. 
Motivated by these experiments, we consider a probability measure that is uniform with high density on an annulus, 
and uniform with low density on the disc inside the annulus. 
We compute the degree-Rips complexes of this probability space up to homotopy type, 
using the Adamaszek--Adams computation of the Vietoris--Rips complexes of the circle. 
These degree-Rips complexes are the limit objects for the Blumberg--Lesnick experiments. 
We argue that the homology inference approach has strong explanatory power in this case, 
and suggest studying the limit objects directly as a strategy for further work.
\end{abstract}

\section{Introduction}

\subsection{Background}

The degree-Rips bifiltration \cite{lesnick-wright} 
is a density-sensitive construction based on 
the Vietoris--Rips filtration. 
The sensitivity to density has two consequences: 
degree-Rips can distinguish metric spaces that are close in the 
Gromov--Hausdorff distance 
but have different patterns of density, 
and degree-Rips is more robust to noise and outliers. 
There are other bifiltrations that share these goals, 
but degree-Rips is of particular interest because, 
using available algorithms and software, 
it is the most computable of these bifiltrations 
that requires only a metric on the data as input. 

If $X$ is a finite metric space, 
the degree-Rips complex $\DR(X)$, 
at parameter $(s, k)$, 
is the full subcomplex of the Vietoris--Rips complex 
$\VR(X)(s)$ on those vertices having degree at least $k-1$ in the one-skeleton. 
Equivalently, we take the Vietoris--Rips complex of the subset 
$X_{(s, k)} = \{ x \in X : |B(x, s)| \geq k \}$, 
where $B(x, s)$ is the open ball in $X$ about $x$ of radius $s$.

There has now been work on the stability of degree-Rips by several authors. 
Recent results of Blumberg--Lesnick \cite{blumberg-lesnick} 
are notable in that they allows for true outliers: 
one can add an arbitrary point to a finite metric space, 
and their results guarantee some relationship between the respective degree-Rips bifiltrations. 
The main result of Blumberg--Lesnick for degree-Rips 
says that if the Gromov--Prokhorov distance between 
the uniform probability measures of two finite metric spaces is less than $\delta$, 
then one has a homotopy interleaving between their degree-Rips bifiltrations, 
with additive term $\delta$, 
and with a multiplicative factor in the Rips parameter $s$. 
They show moreover that the multiplicative factor is tight.

This framework for studying the robustness of degree-Rips is very natural, 
but in the same paper, Blumberg--Lesnick observe that the result 
does not fully capture the robustness of degree-Rips observed in practice. 
They report on the following experiment. 
They consider two pointclouds: a uniform sample of 475 points from an annulus, 
and another pointcloud obtained 
by adding 25 points sampled uniformly from the disc inside the annulus. 
Then they use the RIVET software \cite{rivet} to visualize $H_1 \DR$ of both pointclouds. 
Given the output on the sample with no outliers, 
their results guarantee that a certain region of the degree-Rips parameter space 
for the sample with outliers must have non-zero Hilbert function 
(i.e., the degree-Rips complexes in this region must have non-zero $H_1$); 
however this region is small compared to the observed region where 
the Hilbert function is non-zero. 
It appears that there is a trade-off between the generality of this result, 
and the ability to provide explanatory power in concrete cases such as this one.

We make one more remark before explaining the contribution of this paper. 
The degree-Rips bifiltration is closely related to existing methods for clustering. 
Several widely-used algorithms that arose independently of topological data analysis, 
such as the hierarchical clustering algorithm 
robust single-linkage \cite{chaudhuri-dasgupta-10} 
and the clustering algorithms DBSCAN \cite{dbscan} 
and HDBSCAN \cite{campello-moulavi-sander}, 
can be computed directly from degree-Rips by taking the connected components of the 1-skeleton. 
These algorithms are used in part because of their observed robustness to noise and outliers. 
A satisfactory understanding of the robustness of degree-Rips 
would also add to our understanding of the robustness of these algorithms.

\subsection{Homology inference for degree-Rips}

Motivated by their experiments, 
Blumberg--Lesnick suggest the framework of homology inference 
for obtaining more refined results about the robustness of degree-Rips. 
We now explain one approach to homology inference for degree-Rips. 

There is a natural generalization of the degree-Rips complexes to 
metric probability spaces (\cref{metric-probability-space}). 
Given such a space $(X, \mu)$, 
the degree-Rips complex $\DR(X, \mu)$ at parameter $(s, k)$ 
is the Vietoris--Rips complex of the subset 
$X_{(s, k)} = \{ x \in X : \mu(B(x, s)) \geq k \}$ 
(\cref{def-DR}).
If one gives a finite metric space its uniform probability measure, 
then this definition agrees with the previous one up to normalization. 
Furthermore, on compact metric probability spaces, degree-Rips is $2$-Lipschitz, 
comparing the input using the Gromov--Hausdorff--Prokhorov distance, 
and comparing the output using the homotopy-interleaving distance 
\cite[Theorem 6.5.1]{scoccola-thesis}. 
For the sake of this paper, it is not necessary to know the definition of the 
Gromov--Hausdorff--Prokhorov distance, 
but just the following consequence. 
Say that $\mu$ is a compactly-supported probability measure on Euclidean space 
with support $C$, let $X$ be a finite sample from $\mu$, 
let $\mu_X$ be the uniform measure on $X$, 
and let $\bar{\mu}_X$ be the empirical measure on Euclidean space determined by $X$. 
If the Hausdorff distance $d_{\mathrm{H}}(X, C)$ 
and the Prokhorov distance $d_{\mathrm{P}}(\bar{\mu}_X, \mu)$ are less than $\epsilon$, 
then the homotopy interleaving distance between $\DR(X, \mu_X)$ and $\DR(C, \mu)$ 
is less than $2\epsilon$. 
Here, the hypothesis is stronger than in the result of Blumberg--Lesnick, 
because it includes the Hausdorff hypothesis, 
and the conclusion is also stronger, since one obtains additive interleavings. 
So, we know the limit objects for degree-Rips: 
in probability, $\DR(X, \mu_X)$ converges to $\DR(C, \mu)$ 
in the homotopy-interleaving distance 
as the size of $X$ goes to infinity.

What consequence does this have for the robustness of degree-Rips? 
One way to pose the question of the robustness of degree-Rips is the following. 
If we have a finite metric space $X$, 
and $X'$ has been obtained from $X$ by adding a small number of outliers, 
how do we expect that $\DR(X)$ and $\DR(X')$ are related? 
Roughly speaking, this is how Blumberg--Lesnick ask the question. 
On the level of metric probability spaces, there is an analogous question: 
if we have $\mu$ and $C$ as before, 
and $\mu'$ has been obtained from $\mu$ by mixing with the uniform measure 
on some $C'$ with $C \subset C'$, 
how are $\DR(C, \mu)$ and $\DR(C', \mu')$ related? 

Consider the metric probability spaces from which 
the finite input in the Blumberg--Lesnick experiments are sampled. 
Let $\wannulus$ be the metric probability space 
that consists of the union of the annulus 
$\{ p \in \bbR^2 : \innerrad \leq ||p|| \leq \outerrad \}$ 
and the disc 
$\{ p \in \bbR^2 : ||p|| < \innerrad \}$, 
with a uniform measure on each piece, 
such that the measure of the disc is equal to $w$. 
If $w=0$, take the underlying metric space to be just the annulus. 
See \cref{preliminaries} for a detailed definition. 
In this paper, we compute the degree-Rips bifiltrations of $\wannulus$ 
up to homotopy type, 
using the Adamaszek--Adams computation of the Vietoris--Rips complexes 
of the circle~\cite{adamaszek-adams-published}. 
We now state the result in the case $w > 0$. 
See \cref{sphere-regions} for an 
illustration.\footnote{Scripts to reproduce the figures 
are available at \url{https://github.com/alexanderrolle/degreeRips_annulus}}

\begin{theorem} \label{intro-theorem}
Let $\wannulus$ be a weighted annulus with $w > 0$. 
There are continuous maps 
$\varphi_{\ell} \colon (0, \infty) \to [0,1]$ 
for $\ell = 0, 1, 2, \dots, \infty$ such that, 
for any $s > 0$ and any $k \in [0,1]$,
\[
	\DR(\wannulus)(s,k) \equiv 
	\begin{cases}
	\emptyset & \text{ \; if \; } k > \varphi_0(s) \\
	S^{2\ell+1} & \text{ \; if \; } \varphi_{\ell}(s) > k > \varphi_{\ell+1}(s) 
	\text{ \; for } \ell \neq \infty \\
	* & \text{ \; if \; } \varphi_{\infty}(s) > k
	\end{cases}
\]
Moreover, if $0 < \ell < \infty$ and $0 < s \leq s'$ and $0 \leq k' \leq k \leq 1$ 
are such that
\[
	\varphi_{\ell}(s) > k > \varphi_{\ell+1}(s) \text{ \; and \; } 
	\varphi_{\ell}(s') > k' > \varphi_{\ell+1}(s') \, ,
\]
then the inclusion 
$\DR(\wannulus)(s,k) \into \DR(\wannulus)(s',k')$ 
is a homotopy equivalence.
\end{theorem}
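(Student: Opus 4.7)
My plan is to exploit the rotational symmetry of $\wannulus$ to reduce each degree-Rips complex to the Vietoris--Rips complex of a central circle, and then to apply the Adamaszek--Adams theorem. As a first step, I would study the density function $\rho_s(x) = \mu(B(x,s))$, which depends only on $r = ||x||$ by rotational symmetry. A piecewise computation --- depending on whether $B(x,s)$ meets only the annulus, also the inner disc, or also the complement of $\wannulus$ --- should show that as a function of $r \in [0, \outerrad]$, $\rho_s$ is unimodal, with maximum $M(s)$ attained inside the annulus. Consequently, for each fixed $s$, as $k$ decreases from $M(s)$, the set $X_{(s,k)} = \{x : \rho_s(x) \geq k\}$ passes through the shapes: empty; a concentric annular region strictly inside the original annulus; an annular region reaching across the inner boundary into the disc; and finally the full closed disc of radius $\outerrad$.

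Second, I would compute $\VR(X_{(s,k)})(s)$ in each case. When $X_{(s,k)}$ is the full disc of radius $\outerrad$, it is convex and so its Vietoris--Rips complex is contractible; this will give the region $k < \varphi_\infty(s)$. When $X_{(s,k)}$ is a concentric annular region $\{r_1 \leq ||p|| \leq r_2\}$, I would argue that $\VR(X_{(s,k)})(s)$ is homotopy equivalent to $\VR(S^1_{r_0})(s)$ for a central radius $r_0 \in [r_1, r_2]$, and then invoke the Adamaszek--Adams theorem to obtain $\VR(S^1_{r_0})(s) \equiv S^{2\ell+1}$ whenever the ratio $s/r_0$ lies in an open interval determined by $\ell$. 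The maps $\varphi_\ell$ are then defined as the $k$-values at which $X_{(s,k)}$ transitions between successive Adamaszek--Adams regimes; continuity follows from joint continuity of $\rho_s$ in $(s,r)$ together with continuity of the Adamaszek--Adams thresholds.

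For the ``moreover'' claim, within the region $\{(s,k) : \varphi_\ell(s) > k > \varphi_{\ell+1}(s)\}$ the family $X_{(s,k)}$ is monotonically nested, and each member reduces, by the argument above, to the same $S^{2\ell+1}$-regime of the Adamaszek--Adams filtration for some circle. I would verify that the inclusions $X_{(s,k)} \into X_{(s',k')}$ are compatible with these circle reductions, so that on Vietoris--Rips complexes they correspond to an inclusion within a constant-homotopy-type stretch of the circle's Adamaszek--Adams filtration, which is a homotopy equivalence by functoriality.

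The main obstacle will be the reduction from an annular region to a central circle, since the Vietoris--Rips functor does not preserve arbitrary homotopy equivalences. The radial retraction onto a central circle does not automatically induce a homotopy equivalence of Vietoris--Rips complexes, so I would need either an explicit simplicial argument (such as a chain of elementary collapses exploiting the convexity structure in the radial direction) or a quantitative metric comparison between the Euclidean geometry of the annular region and the chord geometry on a circle of the appropriate radius. Locating the correct $r_0(s,k)$ and tracking how the Adamaszek--Adams transitions translate into the continuous curves $\varphi_\ell(s)$ will also require careful bookkeeping, especially near configurations where $X_{(s,k)}$ first intersects the inner disc.
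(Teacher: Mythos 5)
The central step of your plan---replacing the annular region $X_{(s,k)}$ by a circle at the level of Vietoris--Rips complexes---is exactly the point you leave unresolved, and it is the crux of the whole theorem, so as it stands the proposal has a genuine gap rather than a complete argument. The paper closes this gap with a Lipschitz deformation retract argument (essentially Hausmann's Proposition~2.2, also observed by Adamaszek--Adams): the radial retraction of an annulus onto its \emph{inner} circle admits a homotopy $H$ with $H(-,t)$ $1$-Lipschitz for every $t$, and any such retraction induces homotopy equivalences $\VR(B)(s) \equiv \VR(A)(s)$ (proved via a Whitehead/$\pi_n$ argument with a subdivided straight-line homotopy). Your alternative suggestions (elementary collapses, or a quantitative chord-geometry comparison) are not developed, and more importantly your choice of target circle is off: you propose a ``central radius $r_0 \in [r_1,r_2]$,'' but the inward crushing is $1$-Lipschitz only onto the inner circle, and the homotopy type of $\VR$ of the annulus at scale $s$ is governed by the inner radius $P=r_1$, not a mid-radius. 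For instance, if $2r_1 < s$ the complex is already contractible even though a central circle of radius $r_0$ with $2r_0 > s$ has $\VR$ type $S^1$ or higher; so defining the thresholds $\varphi_\ell$ through a central radius would produce the wrong curves. For the same reason your description of the contractible regime as ``$X_{(s,k)}$ is the full convex disc'' misplaces $\varphi_\infty$: contractibility begins as soon as the inner radius drops below $s/2$, while the region is still an annulus.

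Two smaller points. First, the unimodality of $\rho_s$ in the radius, which you dismiss as ``a piecewise computation,'' is not automatic: in the regime where the ball meets both bounding circles one must show the radial derivative of the ball measure has at most one zero, which the paper does by computing the derivative of the circular-intersection area geometrically (it equals minus the chord length) and running a monotonicity argument; without this, $X_{(s,k)}$ could a priori fail to be a single annulus. Second, your ``moreover'' paragraph is fine in outline, but it only works once the reduction is to the inner circle via an inclusion-compatible equivalence (as in the paper's corollary for the annulus), since one needs the inclusions $\DR(\wannulus)(s,k) \into \DR(\wannulus)(s',k')$ to correspond, up to these equivalences, to inclusions within a single constant-homotopy-type interval of the Adamaszek--Adams filtration of the circle.
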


The result for the case $w=0$ is similar, 
but the curves that bound the regions are no longer continuous; 
we state the result in this case in \cref{the-annulus-without-outliers}. 
Varying $w$ does not have much effect on $\DR(\wannulus)$ 
while $w$ remains small and non-zero. 
Setting $w=0$ has a large effect, 
as we see in \cref{sphere-regions}, 
because in this case only points on the annulus are allowed to appear as vertices of degree-Rips.

Comparing these calculations with the results obtained from finite samples, 
we see that the homology inference approach indeed provides strong explanatory power. 
See \cref{H1-regions}. 
The region of the parameter space where $\DR(\wannulus)$ has the homotopy type of $S^1$, 
and thus has rank $1$ homology in dimension $1$, 
is similar to the region where the Hilbert function of the degree-Rips complexes of the sample 
is equal to $1$. 
Note that when we set $w > 0$ and allow for outliers, 
the region where we see non-zero Hilbert function in the sample extends 
a little further in the direction of increasing Rips parameter value. 
For larger values of the Rips parameter, 
outliers begin to appear as vertices in degree-Rips, 
and they create connections between dense regions that would not otherwise appear. 
In $\wannulus$, all points in the inner disc are allowed to appear as vertices, 
and so these connections appear as soon as possible.

\begin{figure}[ht!]
	\centering
	\begin{minipage}[b]{0.45\textwidth}
		\includegraphics[width=\textwidth]{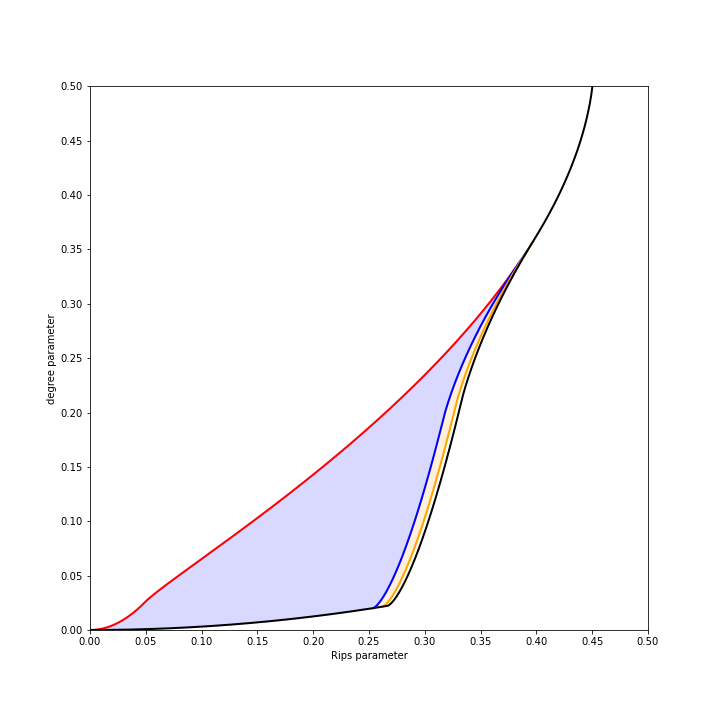}
	\end{minipage}
	\hfill
	\begin{minipage}[b]{0.45\textwidth}
		\includegraphics[width=\textwidth]{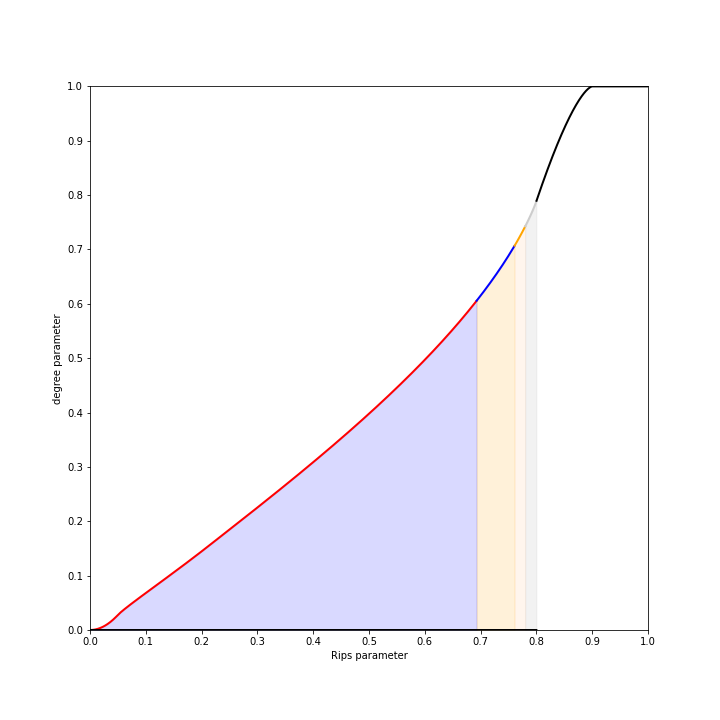}
	\end{minipage}
	\caption{On the left, we consider $\wannulus$ with inner radius 
	$\innerrad=0.4$, outer radius $\outerrad=0.5$, 
	and $w=0.05$, so one expects 25 outliers in a sample of 500, 
	as in the Blumberg--Lesnick experiments. 
	We plot $\varphi_0$ (red), $\varphi_1$ (blue), $\varphi_2$ (yellow), 
	and $\varphi_{\infty}$ (black). 
	The blue region is where the homotopy type of $\DR(\wannulus)$ is $S^1$, 
	and the yellow region is where the homotopy type is $S^3$. 
	On the right, we consider $\wannuluszero$. 
	The meaning of the colors is the same. 
	In this case the boundary curves are not continuous. 
	Note that the two figures are plotted at different scales.}
	\label{sphere-regions}
\end{figure}

\begin{figure}[ht!]
	\centering
	\begin{minipage}[b]{0.45\textwidth}
		\includegraphics[width=\textwidth]{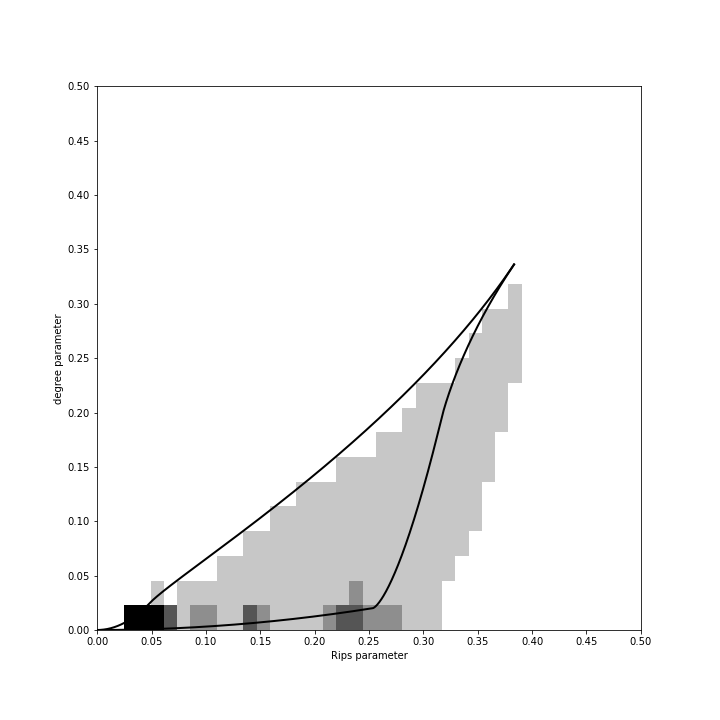}
	\end{minipage}
	\hfill
	\begin{minipage}[b]{0.45\textwidth}
		\includegraphics[width=\textwidth]{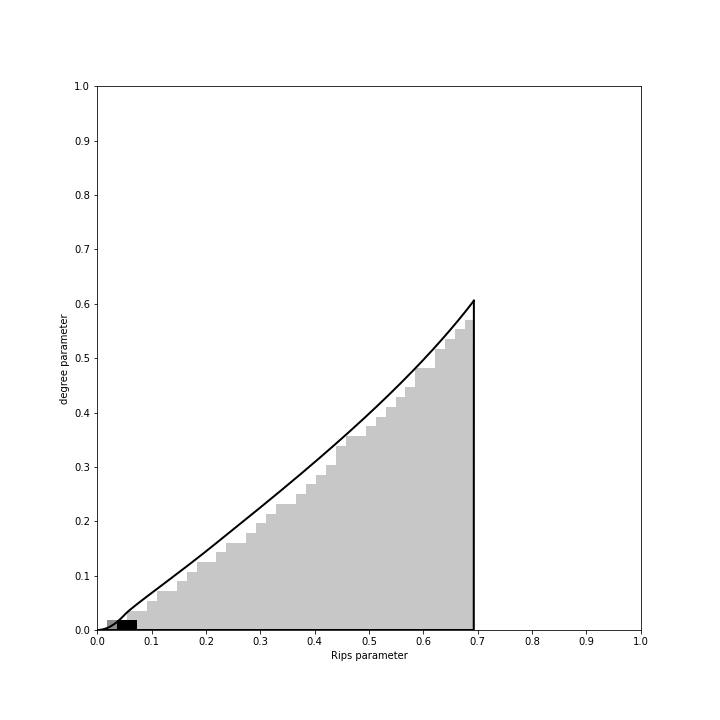}
	\end{minipage}
	\caption{We reproduce the Blumberg--Lesnick experiment. 
	On the left we consider a sample with outliers: 
	we sample 500 points from $\wannulus$, 
	where $\innerrad$, $\outerrad$, and $w$ are as in \cref{sphere-regions}. 
	We use RIVET to compute the Hilbert function of $H_1\DR$ of this sample; 
	the light grey region is where the Hilbert function is equal to $1$, 
	and darker grey corresponds to higher values of the Hilbert function. 
	The border of the $S^1$ region of $\DR(\wannulus)$ is overlayed in black. 
	On the right we consider a sample without outliers: 
	we sample 500 points from $\wannuluszero$. 
	We compare the Hilbert function of  $H_1\DR$ of the sample
	with the $S^1$ region of $\DR(\wannuluszero)$ in the same way.
	Note that the two figures are plotted at different scales.}
	\label{H1-regions}
\end{figure}

\subsection{Related work}

Along with the results mentioned in the introduction, 
Jardine has proved a stability result for 
degree-Rips \cite{jardine-persistent-homotopy}, 
using a hypothesis involving configuration spaces, 
rather than a distance between pointclouds. 
Much work has been done on homology inference, 
using a variety of approaches.  
See for example 
\cite{bcehm-inferring, 
niyogi-smale-weinberger, 
attali-lieutier-salinas, 
zigzag-zoology, 
bobrowski-mukherjee-taylor,
boissonnat_chazal_yvinec}. 
The connection between degree-Rips and existing clustering methods 
was observed by McInnes--Healy \cite{mcinnes-healy}, 
and studied further in 
\cite{jardine-stable-components, rolle-scoccola}. 
There is a large literature on consistency 
of density-based clustering methods. 
See for example 
\cite{cuevas-febrero-fraiman, 
chaudhuri-dasgupta-10, 
rinaldo-wasserman, 
chazal-guibas-oudot-skraba}.

\section{Preliminaries} \label{preliminaries}

We now give definitions and conventions that are used throughout the paper. 
For real numbers $a$ and $b$, the statement $a < b$ implies $a \neq b$.

\begin{definition}
Let $X$ be a metric space and $s > 0$. 
The \define{Vietoris--Rips complex} $\VR(X)(s)$ is the simplicial complex
\[
	\VR(X)(s) = \{ \{x_0, \dots, x_n\} \mid d_X(x_i, x_j) < s 
	\text{ for all } 0 \leq i, j \leq n \}.
\]
\end{definition}

We use $d_X(x_i, x_j) < s$ in this definition because the argument for 
\cref{lipschitz-deformation-retracts} does not work for the version of Vietoris--Rips 
defined with $d_X(x_i, x_j) \leq s$.

\begin{definition} \label{metric-probability-space}
A \define{metric probability space} consists of a metric space $X$ 
together with a Borel probability measure $\mu$ on $X$.
\end{definition}

For a metric space $X$ and $x \in X$, 
we write $B(x, s)$ for the open ball about $x$ of radius $s$.

\begin{definition}
Let $(X, \mu)$ be a metric probability space. 
The \define{uniform filtration} of $(X, \mu)$ is the two-parameter filtration of $X$ 
where, for $s > 0$ and $k \in [0, 1]$, 
$X_{(s, k)} \subseteq X$ is the sub-metric space 
$X_{(s, k)} = \{ x \in X : \mu(B(x, s)) \geq k \}$.
\end{definition}

The uniform filtration is the special case of the kernel filtration 
\cite[Def. 2.24]{rolle-scoccola}, 
where the kernel is chosen to be the uniform kernel \cite[Ex. 2.21]{rolle-scoccola}. 
Given a metric probability space, one can take the kernel filtration 
and then apply any functorial construction on metric spaces. 
For clustering, a natural choice is single-linkage \cite[Def. 2.25]{rolle-scoccola}. 
Applying Vietoris--Rips to the uniform filtration, 
we get an extension of the usual definition of degree-Rips. 
This is also considered in Scoccola's thesis \cite[Sec. 6.5]{scoccola-thesis}: 
the stability result mentioned in the introduction 
is a corollary of a stability result for the kernel filtration.

\begin{definition} \label{def-DR}
Let $(X, \mu)$ be a metric probability space.  
The \define{degree-Rips complex} $\DR(X, \mu)(s, k)$ is the simplicial complex 
$\VR(X_{(s, k)})(s)$.
\end{definition}

We now explain our conventions regarding the circle and annulus. 
For $\innerrad \geq 0$ we write 
$S^1_{\innerrad} = \{ p \in \bbR^2 : ||p || = \innerrad \}$, 
though we sometimes exclude the degenerate case $\innerrad = 0$. 
For $0 \leq \innerrad \leq \outerrad$ 
we write $A_{\innerrad, \outerrad} = \{ p \in \bbR^2 : \innerrad \leq ||p || \leq \outerrad \}$. 
Unless otherwise stated, we view these as metric spaces with the Euclidean metric.

Let $0 < \innerrad < \outerrad$, and let $\weight > 0$. 
We will consider a metric probability space $\wannulus$ 
that consists of the union of the annulus 
$A_{\innerrad, \outerrad}$ and the inner disc 
$\{ p \in \bbR^2 : || p || < \innerrad \}$, 
with a uniform measure on each piece, 
such that the measure of the inner disc is equal to $w$. 
In more detail, 
let $\wannulus$ be the metric probability space with underlying metric space 
$\{ p \in \bbR^2 : || p || \leq \outerrad \}$, 
and with probability measure $\mu$ given by integrating a density $f$, 
where $f(p) = \sdense = w / \pi \innerrad^2$ if $||p|| < \innerrad$ 
and  $f(p) = \ldense = (1 - w) / (\pi \outerrad^2 - \pi \innerrad^2)$ otherwise. 
We say that $\wannulus$ is a \define{weighted annulus} 
if $\sdense < \ldense$. 
Similarly, let $\wannuluszero$ 
be the metric probability space with underlying metric space 
$A_{\innerrad, \outerrad}$, 
and with probability measure $\mu$ given by integrating $f$, 
where $f(p) = \ldense = 1 / (\pi \outerrad^2 - \pi \innerrad^2)$.

\section{The Vietoris--Rips complexes of an annulus}

In this section we prove that the Vietoris--Rips complexes of an annulus 
$A_{\innerrad, \outerrad}$ are homotopy equivalent 
to the Vietoris--Rips complexes of the inner circle $S^1_{\innerrad}$. 
This was observed by Adamaszek--Adams 
\cite[Prop. 10.1]{adamaszek-adams-published}, 
who show that it follows from a result of Hausmann 
\cite[Prop. 2.2]{hausmann}.
The author overlooked this when writing the first draft of this paper, 
and it was pointed out by the reviewers. 

\cref{lipschitz-deformation-retracts} below is very similar to Hausmann's result. 
If $B$ is a deformation retract of a metric space $A$, 
then both results say that the Vietoris--Rips complexes of $A$ and $B$ are 
homotopy equivalent, 
provided the deformation retraction is sufficiently compatible with the metric. 
\cref{lipschitz-deformation-retracts} assumes slightly less about the 
deformation retraction than Hausmann's result, though this is a mild generalization, 
and may be known to experts. 
The exposition of the proof here is perhaps more detailed than Hausmann's, 
so it remains in the paper, in case it is of interest to some readers.

\begin{definition}
Let $A$ be a metric space and $B \subseteq A$. 
We say that $B$ is a \define{Lipschitz deformation retract} of $A$ if 
there is a continuous map $r \colon A \to B$ such that 
$r \circ i = \id_B$ where $i \colon B \into A$ is the inclusion, 
and there is a homotopy 
$H \colon A \times I \to A \, (\rel B)$ 
from $\id_A$ to $r$ such that 
for all $t \in I$, the map $H(- , t) : A \to A$ is $1$-Lipschitz.
\end{definition}

\begin{proposition} \label{lipschitz-deformation-retracts}
If $A$ is a metric space and $B$ is a Lipschitz deformation retract of $A$, 
then the inclusion $B \into A$ induces a homotopy equivalence 
$\VR(B)(s) \equiv \VR(A)(s)$ for all $s > 0$.
\end{proposition}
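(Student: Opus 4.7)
The plan is to show that the maps $\iota \colon \VR(B)(s) \to \VR(A)(s)$ and $\rho \colon \VR(A)(s) \to \VR(B)(s)$ induced by $i$ and $r$ are mutually inverse homotopy equivalences. Since $B$ carries the induced metric, $i$ is $1$-Lipschitz; since $r = H(-, 1)$ is $1$-Lipschitz by hypothesis, so is $r$ as a map into $B$. Hence $i$ and $r$ induce simplicial maps $\iota$ and $\rho$ between the Vietoris--Rips complexes. The identity $r \circ i = \id_B$ immediately gives $\rho \circ \iota = \id_{\VR(B)(s)}$, so everything reduces to showing $\iota \circ \rho \simeq \id_{\VR(A)(s)}$.

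For this I would build the homotopy directly from $H$. For each $t \in I$, $H(-, t)$ is $1$-Lipschitz and therefore takes simplices of $\VR(A)(s)$ to simplices, inducing a simplicial map $\varphi_t \colon \VR(A)(s) \to \VR(A)(s)$ with $\varphi_0 = \id$ and $\varphi_1 = \iota \circ \rho$. The natural candidate homotopy on geometric realizations is
\[
\Psi \colon |\VR(A)(s)| \times I \to |\VR(A)(s)|, \qquad \Psi\Bigl(\sum_{i} \lambda_i x_i,\, t\Bigr) \;=\; \sum_{i} \lambda_i H(x_i, t),
\]
where the right-hand side is read as a point in the simplex $\{H(x_0, t), \dots, H(x_n, t)\}$, which exists because $\varphi_t$ is simplicial. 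The formula is consistent across faces of a simplex, and since $H$ is relative to $B$, every $\Psi(-, t)$ fixes $|\VR(B)(s)|$ pointwise. So, once continuity is verified, $\Psi$ will in fact be a strong deformation retraction onto $|\VR(B)(s)|$, which is more than enough.

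The main obstacle, and the place where the argument needs care, is the continuity of $\Psi$. The subtle point is that for $t \neq t_0$ the images $\{H(x_i, t)\}$ and $\{H(x_i, t_0)\}$ are generally different simplices of $\VR(A)(s)$, so $\Psi(p, t)$ and $\Psi(p, t_0)$ a priori live in distinct cells. The resolution is a contiguity-style estimate: given a simplex $\sigma = \{x_0, \dots, x_n\}$, set $\epsilon_\sigma = s - \max_{i,j} d(x_i, x_j) > 0$; by continuity of $H$ in the second variable and finiteness of the vertex set of $\sigma$, for each $t_0 \in I$ there is $\delta > 0$ such that $|t - t_0| < \delta$ implies $d(H(x_i, t), H(x_i, t_0)) < \epsilon_\sigma$ for all $i$. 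Combining this with the $1$-Lipschitz property of $H(-, t_0)$ via the triangle inequality shows that $\{H(x_i, t)\} \cup \{H(x_j, t_0)\}$ is again a simplex of $\VR(A)(s)$. In this common ambient simplex both $\Psi(p, t)$ and $\Psi(p, t_0)$ have explicit barycentric expressions, and continuity of $\Psi$ on $|\sigma| \times I$ then follows from the continuity of $H$ together with continuity of the barycentric coordinate functions on $|\sigma|$. Since $I$ is compact, the CW-product structure on $|\VR(A)(s)| \times I$ has cells $|\sigma| \times I$, and coherence of this topology lifts the simplex-by-simplex continuity to global continuity of $\Psi$, completing the proof.
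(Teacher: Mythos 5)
Your reduction to showing $\iota \circ \rho \simeq \id_{|\VR(A)(s)|}$ is fine, and your contiguity estimate (using $\epsilon_\sigma = s - \operatorname{diam}(\sigma) > 0$ and the $1$-Lipschitz property of each $H(-,t)$) is a correct and useful computation. But the central construction fails: the map $\Psi(p,t) = \sum_i \lambda_i H(x_i,t)$ is \emph{not} continuous on $|\sigma| \times I$, and the paper explicitly warns about this in \cref{Psi}. The simplest way to see it: take $p$ to be a single vertex $x$. Then $t \mapsto \Psi(x,t) = H(x,t)$ is a map from $I$ into the vertex set of $\VR(A)(s)$, which is a discrete (and closed) subspace of the realization in the weak topology; a continuous map from the connected space $I$ into it must be constant, yet $H(x,0)=x$ and $H(x,1)=r(x)$ differ in general. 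The same phenomenon occurs on higher simplices: for $t \neq t_0$ the barycentric mass of $\Psi(p,t)$ sits on the vertices $H(x_i,t)$, which are \emph{distinct} vertices from $H(x_i,t_0)$ no matter how close $t$ is to $t_0$, so the coordinates in the common ambient simplex $\{H(x_i,t)\} \cup \{H(x_j,t_0)\}$ do not converge as $t \to t_0$ --- they jump. Continuity of $H$ into the metric space $A$ does not transfer to continuity into $|\VR(A)(s)|$, because the weak topology on the realization does not see the metric on the vertex set. So the claimed strong deformation retraction is not established, and no amount of care with the CW product topology repairs this step.

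What your estimate \emph{does} give is exactly what the paper uses: for two \emph{fixed} parameter values $t, t'$ that are close enough (relative to a positive diameter gap), the simplicial maps induced by $H(-,t)$ and $H(-,t')$ send every point into a common simplex, and hence are homotopic by a straight-line homotopy between two fixed maps (\cref{straight-line-homotopy}); this is weaker than continuity of the varying family. The paper then avoids any global homotopy: given a map $f \colon I^n \to |\VR(A)(s)|$, its compact image lies in a finite subcomplex $K$, which supplies a uniform gap $\epsilon = s - D > 0$ and (by uniform continuity of $H(x,-)$ for the finitely many vertices of $K$) a finite subdivision $0 = \tfrac{0}{N} < \tfrac{1}{N} < \dots < 1$ such that consecutive maps $\Psimap_{m} \circ f$ and $\Psimap_{m+1} \circ f$ are homotopic rel $\partial I^n$; chaining these gives $f \simeq i_* r_* f$, whence $r_*$ is injective on $\pi_n$ (surjectivity is formal from $r \circ i = \id_B$), and Whitehead's theorem concludes. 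Note also that the gap $\epsilon$ genuinely depends on the finite subcomplex --- simplices of $\VR(A)(s)$ can have diameter arbitrarily close to $s$ --- which is another reason the argument is run one compact map at a time rather than through a single homotopy defined on all of $|\VR(A)(s)| \times I$. To fix your write-up, you would need to either adopt this discretize-and-apply-Whitehead strategy, or prove by some other means (e.g., cellular approximation arguments) that the subcomplex inclusion is a homotopy equivalence; the formula you propose for $\Psi$ cannot serve as the homotopy.
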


\begin{remark} \label{Psi}
Say $B$ is a Lipschitz deformation retract of $A$, 
and let $H \colon A \times I \to A$ be a homotopy 
as in the definition. 
Then for any $t \in I$ and any $s > 0$, 
there is a simplicial map 
$\Psimap_t \colon |\VR(A)(s)| \to |\VR(A)(s)|$
defined by the vertex map $x \mapsto H(x, t)$. 
These maps appear in the proof of \cref{lipschitz-deformation-retracts}, 
but note that the function
$|\VR(A)(s)| \times I \to |\VR(A)(s)|$ 
defined by $(p, t) \mapsto \Psimap_t(p)$ 
is not continuous in general.
\end{remark}

\begin{lemma} \label{straight-line-homotopy}
Let $K$ be a simplicial complex, let $X$ be compact, 
let $f, g \colon X \to |K|$ be continuous maps, 
and let $Z = \{ x \in X : f(x) = g(x) \}$. 
If for all $x \in X$, there is a simplex $\sigma \in K$ 
with $f(x), g(x) \in |\sigma|$, 
then $f$ and $g$ are homotopic $(\rel Z)$. 
\end{lemma}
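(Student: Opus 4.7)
The plan is to use the straight-line homotopy. For each $x \in X$, the hypothesis gives a simplex $\sigma_x$ containing both $f(x)$ and $g(x)$, and since $|\sigma_x|$ is convex in $|K|$, the formula
\[
	H(x, t) = (1-t) f(x) + t g(x)
\]
makes sense as a point of $|\sigma_x| \subseteq |K|$. The content of the lemma is that $H$ is continuous as a function $X \times I \to |K|$, which is not immediate because different points $x$ may use different simplices $\sigma_x$ and $|K|$ carries the weak topology.

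To handle this, I would first exploit the compactness of $X$ to reduce to a finite subcomplex. Since $f$ and $g$ are continuous and $X$ is compact, $f(X) \cup g(X)$ is a compact subset of $|K|$, hence meets only finitely many open simplices. Let $L \subseteq K$ be the finite subcomplex consisting of all simplices that meet $f(X) \cup g(X)$ together with their faces. By hypothesis, for each $x$ there is some simplex of $K$ containing both $f(x)$ and $g(x)$; the minimal such simplex has its vertices among those that carry $f(x)$ or $g(x)$, and is therefore a simplex of $L$. So $H$ factors set-theoretically through $|L|$, and on the finite subcomplex $|L|$ the weak topology coincides with the metric topology induced by a Euclidean embedding. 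It thus suffices to check continuity of $H \colon X \times I \to |L|$ with respect to this metric.

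Next, I would verify continuity using the convex-combination formula. In the Euclidean embedding of $|L|$, for $(x, t), (x', t') \in X \times I$ one has the standard estimate
\[
	\| H(x, t) - H(x', t') \| \leq \| f(x) - f(x') \| + \| g(x) - g(x') \| + |t - t'| \cdot \bigl( \| f(x') \| + \| g(x') \| \bigr),
\]
which, combined with continuity of $f$ and $g$ and boundedness of $f(X) \cup g(X)$, gives continuity of $H$. Finally, the rel $Z$ condition is immediate: if $f(x) = g(x)$, then $H(x, t) = f(x)$ for all $t \in I$.

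The only real obstacle is the one already addressed: passing from the pointwise convex-combination description to a globally continuous map into $|K|$. Once one reduces to a finite subcomplex and uses that simplices are convex in the metric realization, the rest is routine.
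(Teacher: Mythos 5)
Your overall strategy---the straight-line homotopy combined with a compactness reduction to a finite subcomplex---is the same as the paper's, and your continuity check via a linear (Euclidean) realization of a finite complex is a perfectly good substitute for the paper's gluing argument over the finite closed cover by the sets $f^{-1}(|\sigma|)\cap g^{-1}(|\sigma|)$. The gap is in your definition of $L$. You take $L$ to be the (finitely many) simplices whose interiors meet $f(X)\cup g(X)$, together with their faces, and then assert that the minimal simplex containing both $f(x)$ and $g(x)$ ``has its vertices among those that carry $f(x)$ or $g(x)$, and is therefore a simplex of $L$.'' That inference is false: a simplex all of whose vertices lie in a subcomplex need not itself belong to the subcomplex. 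Concretely, let $K$ be a single edge with vertices $a,b$, let $X$ be a one-point space $\{x\}$, and let $f(x)=a$, $g(x)=b$. Then $f(X)\cup g(X)=\{a,b\}$ meets only the two vertex cells, so your $L$ consists of the two vertices alone; the edge is not in $L$, and $H(x,\tfrac{1}{2})\notin|L|$, so $H$ does not even factor through $|L|$. (If instead ``meets'' is read with closed simplices, then $L$ does contain the needed simplices, but it need no longer be finite---take an infinite wedge of edges at a vertex lying in the image---so the reduction breaks for the opposite reason.)

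The repair is exactly the extra step the paper takes: choose finite subcomplexes $K_f\supseteq f(X)$ and $K_g\supseteq g(X)$ and set $L=K_f\cup K_g\cup\{\sigma\cup\tau : \sigma\in K_f,\ \tau\in K_g,\ \sigma\cup\tau\in K\}$, which is still a finite subcomplex of $K$. For each $x$, the carrier $\sigma$ of $f(x)$ and the carrier $\tau$ of $g(x)$ are faces of the common simplex provided by the hypothesis, hence $\sigma\cup\tau\in K$ and $f(x),g(x)$ lie in the common simplex $\sigma\cup\tau$ of $L$; in particular $H$ lands in $|L|$. With that corrected choice of $L$, the rest of your argument (the estimate $\|H(x,t)-H(x',t')\|\leq\|f(x)-f(x')\|+\|g(x)-g(x')\|+|t-t'|\cdot(\|f(x')\|+\|g(x')\|)$ in a linear realization of the finite complex $|L|$, and the observation that $H(x,t)=f(x)$ for all $t$ when $x\in Z$) goes through.
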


\begin{proof}
We begin by proving the statement assuming that $K$ is finite. 
Define the homotopy $H \colon X \times I \to |K|$ as follows. 
For $x \in X$, write $f(x)$ and $g(x)$ in barycentric coordinates, 
$f(x) = \sum_{i} \alpha_i v_i$ and 
$g(x) = \sum_{j} \beta_j w_j$. 
For $t \in I$, let 
$H(x, t) = (1 - t) \cdot \sum_{i} \alpha_i v_i + t \cdot \sum_{j} \beta_j w_j$. 
We have $H(x, t) \in |K|$ since there is $\sigma \in K$ with $f(x), g(x) \in |\sigma|$. 
We now show that $H$ is continuous. 
For $\sigma \in K$, let $V_{\sigma} = f^{-1}(|\sigma|) \cap g^{-1}(|\sigma|)$. 
As $f$ and $g$ are continuous, $V_{\sigma}$ is closed in $X$, 
and $\mathcal{V} = \{ V_{\sigma} \times I : \sigma \in K \}$ 
is a finite, closed cover of $X \times I$. 
Now $H|_{V_{\sigma} \times I} \colon V_{\sigma} \times I \to |\sigma|$ 
is continuous for all $\sigma$, and therefore $H$ is continuous. 

Now we prove the general statement. 
As $X$ is compact, there are finite subcomplexes $K_f, K_g \subseteq K$ 
such that $f(X) \subseteq |K_f|$ and $g(X) \subseteq |K_g|$. 
Define 
$L = K_f \cup K_g \cup 
\{ \sigma \cup \tau : \sigma \in K_f, \tau \in K_g, \text{ and } \sigma \cup \tau \in K \}$. 
Then $L$ is a finite subcomplex of $K$ such that $f(X), g(X) \subseteq |L|$. 
For $x \in X$, let $\sigma \in K_f$ be the minimal simplex with $f(x) \in |\sigma|$ 
and let $\tau \in K_g$ be the minimal simplex with $g(x) \in |\tau|$. 
As $f(x), g(x)$ lie in a common simplex of $K$, we must have $\sigma \cup \tau \in K$, 
and thus $f(x), g(x)$ lie in a common simplex of $L$. 
Now the statement follows from the finite case.
\end{proof}

\begin{proof}[Proof of \cref{lipschitz-deformation-retracts}]
By assumption, there is $r \colon A \to B$ such that 
$r \circ i = \id_B$ where $i \colon B \into A$ is the inclusion, 
and there is a homotopy 
$H \colon A \times I \to A \, (\rel B)$ 
from $\id_A$ to $r$ such that 
for all $t \in I$, the map $H(- , t) : A \to A$ is $1$-Lipschitz. 
We show that $r_* \colon |\VR(A)(s)| \to |\VR(B)(s)|$ 
induces isomorphisms in $\pi_n$ for all $n \geq 0$, 
and the statement follows by Whitehead's theorem.
Since $r \circ i = \id_{B}$ it follows from functoriality that 
the induced maps on $\pi_n$ are surjective for all $n \geq 0$, 
and it remains to show they are injective.

We begin with $\pi_0$. 
For $x \in A$, observe that $[x] = [r(x)]$ in $\pi_0 \VR(A)(s)$, 
as $H(x, -) : I \to A$ is a path from $x$ to $r(x)$. 
Now, let $x, y \in A$, and say $r_*([x]) = r_*([y])$ in $\pi_0 \VR(B)(s)$. 
Then in $\pi_0 \VR(A)(s)$,
$[x] = [(i \circ r)(x)] = i_*(r_*([x])) = i_*(r_*([y])) = [(i \circ r)(y)] = [y].$

Now, let $b \in B$ be a choice of basepoint. 
Since we know $r$ induces an isomorphism on $\pi_0$, it suffices to consider basepoints in $B$. 
Say $f \colon I^n \to |\VR(A)(s)|$ is a continuous map representing an element of 
$\pi_n(|\VR(A)(s)|, b)$. As $I^n$ is compact, there is a finite subcomplex 
$K \subseteq \VR(A)(s)$ such that $f(I^n) \subseteq |K|$. 
Let $D = \max \{ \mathrm{diameter}(\sigma) : \sigma \in K \}$. 
As $K$ is finite, we have $D < s$. 
Write $\epsilon = s - D$. 
For $x \in A$, let $P_{x} = H(x, -) : I \to A$. 
As $I$ is compact, $P_{x}$ is uniformly continuous, 
and thus there is $\delta_{x} > 0$ such that 
$d_A(P_{x}(t), P_{x}(t')) < \epsilon$ when $|t - t'| < \delta_{x}$. 
Let $\delta = \min \{ \delta_{x} : x \text{ is a vertex of } K \}$, 
and choose $N$ such that $1 / N < \delta$. 
For $0 \leq m \leq N$, we write 
$\Psimap_{m} = \Psimap_{\frac{m}{N}}$ 
for the map induced by $H$, defined in \cref{Psi}.

We now show that, for any $0 \leq m < N$, 
$\Psimap_m \circ f \equiv \Psimap_{m+1} \circ f \, (\rel \partial I^n)$. 
By \cref{straight-line-homotopy}, it suffices to show that, for all $p \in I^n$, 
$(\Psimap_m \circ f)(p)$ and $(\Psimap_{m+1} \circ f)(p)$ lie in a common simplex of $\VR(A)(s)$. 
For this, choose $\sigma = \{ x_0, \dots, x_{\ell} \} \in K$ 
with $f(p) \in |\sigma|$; then 
$\{\Psimap_m(x_0), \dots, \Psimap_m(x_{\ell}), 
\Psimap_{m+1}(x_0), \dots, \Psimap_{m+1}(x_{\ell}) \}$ 
is the desired simplex. 
To see that it is indeed a simplex of $\VR(A)(s)$, observe that
\begin{align*}
	d_A (\Psimap_m(x_i), \Psimap_{m+1}(x_j)) &= 
	d_A (H(x_i, \tfrac{m}{N}), H(x_j, \tfrac{m+1}{N})) \\
	&\leq d_A (H(x_i, \tfrac{m}{N}), H(x_i, \tfrac{m+1}{N})) 
	+ d_A (H(x_i, \tfrac{m+1}{N}), H(x_j, \tfrac{m+1}{N})) \\
	&< \epsilon + d_A (x_i, x_j) \; \leq s \, .
\end{align*}

It follows that $f \equiv \Psimap_N \circ f \, (\rel \partial I^n)$. 
Note that $\Psimap_N = i_* \circ r_*$, 
where $r_* \colon |\VR(A)(s)| \to |\VR(B)(s)|$ is as above, 
and $i_* \colon |\VR(B)(s)| \to |\VR(A)(s)|$ 
is induced by the inclusion $i \colon B \into A$. 
Now say that $r_*([f]) = 0$ in $\pi_n \left( |\VR(B)(s)|, b \right)$. 
Then in $\pi_n \left( |\VR(A)(s)|, b \right)$, 
$[f] = [i_* \circ r_* \circ f] = i_*(r_*([f])) = 0$.
\end{proof}

\begin{corollary} \label{VR-of-annulus}
For any $0 \leq \innerrad \leq \outerrad$, 
the inclusion $S^1_{\innerrad} \into A_{\innerrad, \outerrad}$ 
induces a homotopy equivalence 
$\VR(S^1_{\innerrad})(s) \equiv \VR(A_{\innerrad, \outerrad})(s)$ 
for all $s > 0$.
\end{corollary}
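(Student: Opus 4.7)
The plan is to invoke \cref{lipschitz-deformation-retracts}, so it suffices to exhibit $S^1_{\innerrad}$ as a Lipschitz deformation retract of $A_{\innerrad, \outerrad}$.

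First I would define the retraction $r \colon A_{\innerrad, \outerrad} \to S^1_{\innerrad}$ by radial projection: set $r(p) = \innerrad \cdot p / \|p\|$ when $\innerrad > 0$, and $r(p) = 0$ in the degenerate case $\innerrad = 0$. Either way, $r$ agrees with the nearest-point projection onto the closed convex set $\overline{B(0, \innerrad)} \subseteq \bbR^2$, restricted to $A_{\innerrad, \outerrad}$: for $p \in A_{\innerrad, \outerrad}$ with $\|p\| \geq \innerrad$, the closest point of $\overline{B(0, \innerrad)}$ to $p$ is exactly $r(p)$. Thus $r$ is $1$-Lipschitz by the standard fact that projection onto a nonempty closed convex subset of Euclidean space is $1$-Lipschitz, and obviously $r \circ i = \id_{S^1_{\innerrad}}$.

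Next I would use the straight-line homotopy
\[
H(p, t) = (1-t) p + t \cdot r(p) \, .
\]
Since $p$ and $r(p)$ lie on a common ray from the origin, we have $\|H(p, t)\| = (1-t)\|p\| + t\innerrad \in [\innerrad, \outerrad]$, so $H$ takes values in $A_{\innerrad, \outerrad}$. It is continuous, with $H(-, 0) = \id_A$ and $H(-, 1) = r$, and for $p \in S^1_{\innerrad}$ we have $r(p) = p$, so $H(p, t) = p$ for all $t$; hence $H$ is a homotopy relative to $S^1_{\innerrad}$. Finally, for each $t \in I$ the map $H(-, t) = (1-t)\id + t \cdot r$ is a convex combination of two $1$-Lipschitz maps, so by the triangle inequality
\[
\|H(p, t) - H(p', t)\| \leq (1-t)\|p - p'\| + t \|r(p) - r(p')\| \leq \|p - p'\|,
\]
and $H(-, t)$ is $1$-Lipschitz. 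Applying \cref{lipschitz-deformation-retracts} then yields the corollary.

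I do not expect a serious obstacle here: once $r$ is identified with nearest-point projection onto the inner disc, the $1$-Lipschitz property of $r$ is immediate, and both the well-definedness of $H$ and its Lipschitz bound fall out of the fact that the straight-line homotopy stays on a single radial ray. The only nuance is the degenerate case $\innerrad = 0$, which is handled separately but trivially by the constant retraction.
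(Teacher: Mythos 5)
Your proof is correct and takes essentially the same route as the paper: the paper also uses the radial straight-line deformation retraction (written in polar coordinates as $\left((r,\theta),t\right) \mapsto \left((1-t)r + t\innerrad, \theta\right)$, which is exactly your $H(p,t) = (1-t)p + t\cdot r(p)$) and then applies \cref{lipschitz-deformation-retracts}. The only difference is that you spell out the $1$-Lipschitz verification via nearest-point projection onto the convex disc, which the paper leaves implicit.
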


\begin{proof}
The homotopy 
$H \colon A_{\innerrad, \outerrad} \times I \to A_{\innerrad, \outerrad}$ 
defined by 
$\left( (r, \theta), t \right) \mapsto 
\left( (1 - t) \cdot r + t \cdot \innerrad, \theta \right)$ 
shows that $S^1_{\innerrad}$ is a Lipschitz deformation retract of 
$A_{\innerrad, \outerrad}$.
\end{proof}

\section{Boundary curves in the degree-Rips parameter space}
\label{boundary-curves}

In this section we prove \cref{intro-theorem}. 
To motivate the approach, we first explain the basic idea 
for how to compute the homotopy type of $\DR(\wannulus)(s,k)$ 
for a particular choice of $s$ and $k$. 
We will show in this section that the subspace $\wannulus_{(s,k)}$ is an annulus; 
write $P$ for its inner radius.  
By \cref{VR-of-annulus}, 
$\DR(\wannulus)(s,k) \equiv \VR(S^1_{P})(s)$, 
and then one concludes by the Adamaszek--Adams calculation of the Vietoris--Rips 
complexes of the circle.

To begin, we need to compute the measure of an $s$-ball in $\wannulus$, 
and for this we need to know the area of the intersection of the $s$-ball with the annulus, 
and with the inner disc. 
We now briefly explain how to do this, though we omit most formulas for brevity. 

Let $R > 0$ and $s > 0$. 
We write $O$ for the origin in $\bbR^2$ 
and $C(p, r)$ for the circle centred at $p$ of radius $r$. 
Define a function $\alpha \colon [0, \infty) \to \bbR$ 
by letting $\alpha(c)$ be the area of the intersection 
$B\left(O, R \right) \cap B\left( (c, 0), s \right)$. 
We calculate $\alpha(c)$ using the usual formulas for the area of circular segments, 
but there are several cases. 
In \cref{c-s-space} on the left we show the $(c, s)$-space, 
with the curves that delimit the cases:
\begin{align}
	c + s &= R \\
	s - c &= R \\
	c - s &= R \\
	c^2 + s^2 &= R^2 \\
	s^2 - c^2 &= R^2
\end{align}
If $(c, s)$ is outside the region bounded by the lines $1, 2, 3$, 
then the circles $C(O, R)$ and $C((c, 0), s)$ do not intersect, and $\alpha$ is constant. 
If $(c, s)$ is inside this region, then the circles intersect in two points: 
if $(c, s)$ is to the left of the circle $4$, 
then the centre $(c, 0)$ is to the left of the chord connecting these points of intersection, 
otherwise it is to the right; 
if $(c, s)$ is to the left of the hyperbola $5$, 
then the origin is to the right of this chord, 
otherwise it is to the left. 
In each region, we calculate $\alpha(c)$ as a sum of areas of circular segments, 
or their complements.

\begin{figure}[hbtp]
	\centering
	\begin{minipage}[b]{0.45\textwidth}
		\raisebox{70pt}{
		\begin{overpic}[width=0.9\textwidth]{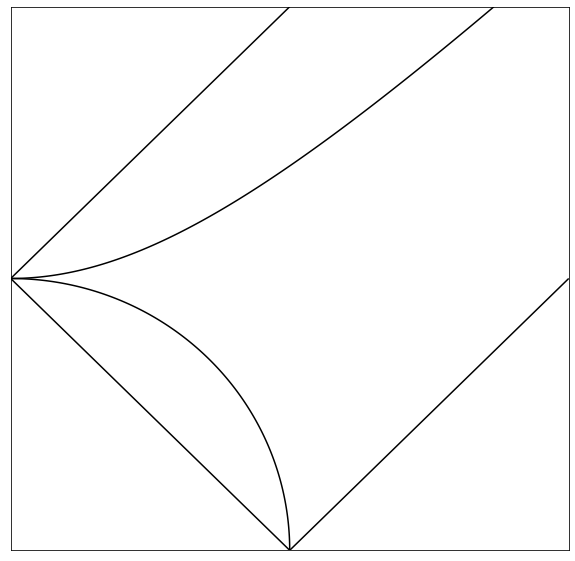}
		\put (30,10) {1}
		\put (33,90) {2}
		\put (92,32) {3}
		\put (38,38) {4}
		\put (60,70) {5}
		\put (49, -5) {R}
		\put (-5, 47){R}
		\end{overpic}}
	\end{minipage}
	\hfill
	\begin{minipage}[b]{0.45\textwidth}
		\begin{overpic}[width=0.9\textwidth]{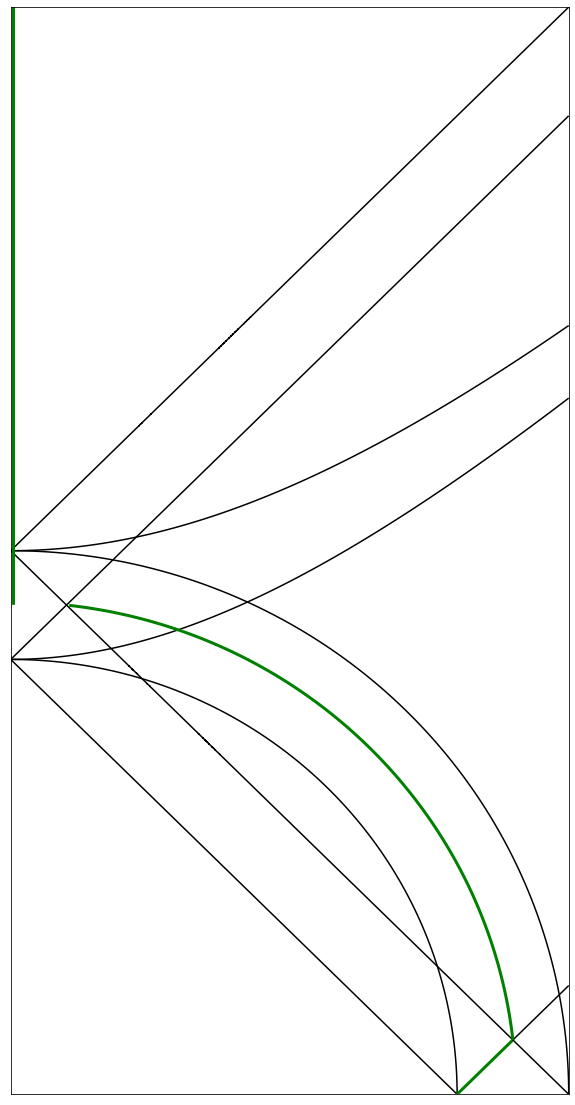}
		\put (25.5,10.5) {1R}
		\put (30.5,15.5) {1Q}
		\put (35,78.5) {2R}
		\put (35,89) {2Q}
		\put (29.5,29.5) {4R}
		\put (36.5,36.5) {4Q}
		\put (40,53) {5R}
		\put (40,60.5) {5Q}
		\put (48,5) {3R}
		\put (40.5, -2) {R}
		\put (50.5, -2) {Q}
		\put (-3, 39.5) {R}
		\put (-3, 49.5) {Q}
		\end{overpic}
	\end{minipage}
	\caption{Let $R > 0$. 
	We can think of $\alpha_R$ as a function of two variables $c$ and $s$. 
	On the left, we show the domain of $\alpha_R$, with 
	$c$ on the horizontal axis and $s$ on the vertical axis. 
	In each region we compute $\alpha_R$ using a sum of areas of circular segments, 
	or their complements. 
	To compute $\gamma$, we need to compute $\alpha_R$ and $\alpha_Q$. 
	On the right we plot the domain of $\gamma$ as a function of $c$ and $s$, 
	with	 $R = 0.4$ and $Q = 0.5$. 
	In green, we plot $\omega(s)$ with $w=0$. 
	In this case 
	$\omega(s)$ is the value of $c$ that maximizes the area of the intersection 
	between the annulus $A_{\innerrad, \outerrad}$ 
	and the $s$-ball centred at $(c,0)$.}
	\label{c-s-space}
\end{figure}

In order to prove the main theorem, 
we need to understand how the area of a ball in $\wannulus$ changes as its centre varies. 
For this it will be helpful to understand the derivative of $\alpha$. 
This seems complicated to compute directly from the formula 
for the area of a circular segment, 
but instead we can compute the derivative using a geometric argument. 
Define $y \colon [0, \infty) \to \bbR$ by
\[
	y(c) = 
	\begin{cases}
	0 & \text{if $C(O, R) \cap C((c, 0), s) = \emptyset$} \\
	\max \{ x_2 : (x_1, x_2) \in C(O, R) \cap C((c, 0), s) \} & \text{else}
	\end{cases}
\]
If the circles intersect in exactly two points, 
then $y(c)$ is simply the $y$-coordinate of these points, with positive sign, 
and then $2y(c)$ is the length of the chord connecting these points of intersection.

\begin{lemma} \label{alpha-prime}
The map $\alpha$ is continuously differentiable and $\alpha'(c) = -2y(c)$.
\end{lemma}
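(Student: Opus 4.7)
The approach is a geometric flux argument, supplemented by case analysis to handle degeneracies. First, in the degenerate regime --- where $C(O, R)$ and $C((c, 0), s)$ do not meet, or meet tangentially --- the set $B((c, 0), s) \cap B(O, R)$ equals one of $\emptyset$, $B(O, R)$, or $B((c, 0), s)$ throughout a neighborhood of $c$, so $\alpha$ is locally constant there and $\alpha'(c) = 0$. Meanwhile $y(c) = 0$ by definition, so the formula holds.

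In the generic regime, where the two circles meet transversally in two points, these points are symmetric about the $x$-axis, of the form $(\xi(c), \pm y(c))$. Since the disc $B(O, R)$ is stationary while $B((c, 0), s)$ translates with velocity $(1, 0)$, the transport theorem gives
\[
	\alpha'(c) = \int_{C((c, 0), s) \cap B(O, R)} (1, 0) \cdot \mathbf{n} \, d\ell,
\]
where $\mathbf{n}$ is the outward unit normal to $B((c, 0), s)$. Parametrizing $C((c, 0), s)$ by $\theta \mapsto (c + s \cos \theta, s \sin \theta)$, the arc lying inside $B(O, R)$ is the one facing the origin: for $c > 0$, its leftmost point $(c - s, 0)$ is at distance $|c - s| < R$ from $O$ by the transversality condition $|c - s| < R < c + s$, so the arc runs from $\theta = \theta^*$ to $\theta = 2\pi - \theta^*$ with $\sin \theta^* = y(c)/s$. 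The integral then evaluates to
\[
	s \int_{\theta^*}^{2 \pi - \theta^*} \cos \theta \, d\theta = s \bigl( \sin(2 \pi - \theta^*) - \sin \theta^* \bigr) = -2 y(c).
\]

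Continuous differentiability follows from continuity of $y$: as $c$ approaches the boundary between the two regimes from within the generic side, the two intersection points merge and $y(c) \to 0$, matching the zero value of $\alpha'$ on the degenerate side. The main obstacle is bookkeeping --- identifying which arc of $C((c, 0), s)$ lies inside $B(O, R)$ in each sub-region of \cref{c-s-space}, and confirming that the angular parametrization covers it exactly once. An alternative that sidesteps the flux argument is to apply Leibniz's rule directly to the slice representation $\alpha(c) = \int 2 \min(\sqrt{R^2 - x^2}, \sqrt{s^2 - (x - c)^2}) \, dx$ (integrands understood as zero where the square roots are not real), split at the crossover $x = \xi(c)$; the boundary terms at $\xi$ cancel since the two integrands agree there, and the surviving $\partial_c$ contribution again recovers $-2 y(c)$.
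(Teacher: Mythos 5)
Your generic-case computation is correct, and it is a genuinely different packaging of the same geometric idea as the paper's proof: the paper estimates the difference quotient directly, observing that $\alpha(c+\delta)-\alpha(c)$ is the area swept inside $B(O,R)$ between $C((c,0),s)$ and $C((c+\delta,0),s)$, which equals $-2y(c)\delta$ up to an error of at most $2|\delta|\,|y(c)-y(c+\delta)|$, and then concludes with continuity of $y$; you instead invoke the transport theorem and integrate the normal velocity over the arc of $C((c,0),s)$ lying in $B(O,R)$. Your identification of that arc and the evaluation $s\int_{\theta^*}^{2\pi-\theta^*}\cos\theta\,d\theta=-2y(c)$ are right (only $\sin\theta^*=y(c)/s$ enters, so the ambiguity in $\theta^*$ is harmless), and the Leibniz-rule alternative on the slice representation also goes through, with the boundary terms at $x=\xi(c)$ cancelling as you say. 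The paper's squeeze argument has the advantage of treating all $c$ uniformly, including the case boundaries, whereas your argument is case-split and must be glued at the boundaries.

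That gluing is where there is a genuine, though repairable, gap. Your claim that in the degenerate regime the intersection equals $\emptyset$, $B(O,R)$, or $B((c,0),s)$ ``throughout a neighborhood of $c$'' is false at the tangency parameters $c=R+s$ and $c=|R-s|$: there $\alpha$ is constant on one side only, while on the other side the lens has positive, varying area, so $\alpha$ is not locally constant and your degenerate-case argument does not establish $\alpha'(c)=0$ at those points. (Also, in the corner case $s=R$, $c=0$, the two circles coincide, so $y(0)=R\neq 0$ and ``$y(c)=0$ by definition'' fails, although the formula $\alpha'(0)=-2y(0)$, as a one-sided derivative, is still correct.) Your final paragraph only checks that the one-sided limits of $\alpha'$ agree across the regime boundary, which by itself does not prove that $\alpha$ is differentiable at the boundary points. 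The fix is short: $\alpha$ is continuous, you have $\alpha'=-2y$ on both open regimes, and $y$ is continuous with $y\to 0$ at the tangency locus, so the standard mean-value-theorem lemma (a function continuous at $c_0$ and differentiable near $c_0$ away from $c_0$, whose derivative has a limit at $c_0$, is differentiable at $c_0$ with that derivative) gives $\alpha'(c_0)=-2y(c_0)$ at the boundary as well, and hence $\alpha$ is $C^1$ everywhere. With that one-line patch your proof is complete.
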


\begin{proof}
Since $\alpha$ can be computed by summing the areas of circular segments, 
the formula for this area shows that $\alpha$ is differentiable. 
By definition, 
\[
	\alpha'(c) = \lim_{\delta \to 0} \frac{\alpha(c + \delta) - \alpha(c)}{\delta} \; .
\]
Now, $\alpha(c + \delta) - \alpha(c)$ is the (signed) area of the subset of 
$B\left( O, R \right)$ between the circles 
$C((c, 0), s)$ and $C((c + \delta, 0), s)$. 
And, $-2y(c) \cdot \delta$ approximates this area, in the sense that 
we have the following inequality for all $s, c$ and small enough $\delta$:
\[
	| \left( \alpha(c+\delta) - \alpha(c) \right) + 2y(c) \cdot \delta | 
	\leq 2 \cdot |\delta| \cdot |y(c) - y(c + \delta)| \; .
\]
So, 
\begin{align*}
	| \alpha'(c) + 2y(c) | 
	&\leq \lim_{\delta \to 0} \frac{2 \cdot |\delta| \cdot |y(c) - y(c + \delta)|}{|\delta|} \\
	&= \lim_{\delta \to 0} 2 \cdot |y(c) - y(c+\delta)| = 0 \; .
\end{align*}
\end{proof}

Let $0 < \innerrad < \outerrad$. 
We now consider the area of the intersection of the ball $B\left( (c, 0), s \right)$ 
with the annulus $\annulus = \{ p \in \bbR^2 : \innerrad \leq || p || \leq \outerrad\}$. 
Define a function $\gamma \colon [0, \outerrad] \to \bbR$ 
by letting $\gamma(c)$ be the area of the intersection 
$\annulus \cap B\left( (c, 0), s \right)$. 
We change notation in order to relate $\gamma$ with $\alpha$: 
instead of $\alpha$ we write $\alpha_R$, 
where $R$ is the radius of the circle centred at the origin. 
With this notation, $\gamma = \alpha_{\outerrad} - \alpha_{\innerrad}$.
We calculate $\gamma$ using this formula, though now there are more cases; 
see \cref{c-s-space} on the right for the curves delimiting the cases. 

Again we want to understand $\gamma'$. 
The interesting case is when the ball $B\left( (c, 0), s \right)$ 
intersects both circles $C(O, \innerrad)$ and $C(O, \outerrad)$. 
This happens when $(c, s)$ is in the region bounded by the lines:
\begin{align*}
	c + s &= Q \tag{1Q} \\
	s - c &= R \tag{2R} \\
	c - s &= R \tag{3R}
\end{align*}
We now collect together the facts we will need about $\gamma$:

\begin{lemma} \label{gamma-prime}
Say $c + s > Q$ and $s - c < R$ and $c - s < R$. Then,
\vspace{2pt}
\begin{itemize}
	\item $\gamma'(c) = 0 \text{ \; if and only if \; } c^2 + s^2 = 
	\tfrac{1}{2}(\innerrad^2 + \outerrad^2)$
	\vspace{2pt}
	\item $\gamma'(c) > 0 \text{ \; if and only if \; } c^2 + s^2 < 
	\tfrac{1}{2}(\innerrad^2 + \outerrad^2)$
	\vspace{2pt}
	\item $\gamma'(c) < 0 \text{ \; if and only if \; } c^2 + s^2 > 
	\tfrac{1}{2}(\innerrad^2 + \outerrad^2)$.
	\vspace{4pt}
\end{itemize}
Moreover, if we have $c \leq c'$ with 
$c' + s > Q$ and $s - c' < R$ and $c' - s < R$ 
and $\gamma'(c), \gamma'(c') > 0$, 
then $\gamma'(c) \geq \gamma'(c')$.
\end{lemma}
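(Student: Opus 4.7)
The plan is to use \cref{alpha-prime} to express $\gamma'$ as a difference of chord heights, and then exploit a single algebraic identity for $y_R^2 - y_Q^2$ to handle both the sign characterization and the monotonicity claim in a uniform way.

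\textbf{Setup.} The three inequalities in the hypothesis are exactly what is needed for $C((c,0),s)$ to meet each of $C(O,R)$ and $C(O,Q)$ transversally in two points, so I apply \cref{alpha-prime} to both $\alpha_R$ and $\alpha_Q$. Writing $\gamma = \alpha_Q - \alpha_R$, this gives $\gamma'(c) = 2(y_R(c) - y_Q(c))$, where $y_T$ denotes the positive $y$-coordinate of the intersection $C(O,T) \cap C((c,0),s)$.

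\textbf{Sign characterization.} Using $y_T^2 = T^2 - x_T^2$ with $x_T = (c^2 + T^2 - s^2)/(2c)$, a routine expansion yields
\[
    y_R^2 - y_Q^2 = \frac{(Q^2 - R^2)\,(R^2 + Q^2 - 2c^2 - 2s^2)}{4c^2}.
\]
Since $Q > R > 0$ and $c > 0$, this has the sign of $\tfrac{1}{2}(R^2 + Q^2) - (c^2 + s^2)$; and since $y_R, y_Q \geq 0$, the sign of $y_R - y_Q$ agrees with that of $y_R^2 - y_Q^2$, so the three cases follow.

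\textbf{Monotonicity.} I differentiate $y_T^2 = T^2 - x_T^2$ implicitly to obtain $y_T' = \beta_T x_T/(c y_T)$, where $\beta_T = x_T - c = (T^2 - c^2 - s^2)/(2c)$; direct expansion then shows the compact identity $\beta_T x_T = ((T^2 - s^2)^2 - c^4)/(4c^2)$, from which
\[
    \beta_Q x_Q - \beta_R x_R = \frac{(Q^2 - R^2)(Q^2 + R^2 - 2s^2)}{4c^2}.
\]
In the region where $\gamma'(c) > 0$, the hypothesis $c^2 + s^2 < \tfrac{1}{2}(R^2 + Q^2) \leq Q^2$ forces $Q^2 - s^2 > c^2 > 0$, hence $\beta_Q x_Q > 0$; the same hypothesis also gives $R^2 + Q^2 > 2s^2$, so $\beta_Q x_Q > \beta_R x_R$. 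Chaining these with $y_R \geq y_Q > 0$ produces
\[
    \beta_R x_R \, y_Q \;\leq\; \beta_Q x_Q \, y_Q \;\leq\; \beta_Q x_Q \, y_R,
\]
which rearranges to $y_R'(c) - y_Q'(c) \leq 0$ throughout the region. Since for each fixed $s$ the set of admissible $c$ is an interval, $y_R - y_Q$ is non-increasing there, giving $\gamma'(c) \geq \gamma'(c')$ whenever $c \leq c'$ both satisfy the hypotheses.

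\textbf{The main obstacle} to a direct proof is that a sign analysis of $y_R'$ and $y_Q'$ individually splits into awkward subcases depending on whether $R^2$ exceeds $c^2 + s^2$ and on the sign of $x_R$. The crux of this plan is that the single inequality defining the positivity region, $R^2 + Q^2 > 2(c^2 + s^2)$, simultaneously forces $\beta_Q x_Q > 0$ and $\beta_Q x_Q > \beta_R x_R$, and these two facts are precisely what the two-step inequality chain needs in order to bypass all case analysis.
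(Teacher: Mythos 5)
Your proof is correct, and it verifies cleanly: the identity $y_R^2 - y_Q^2 = (Q^2-R^2)(R^2+Q^2-2c^2-2s^2)/(4c^2)$, the formula $y_T' = \beta_T x_T/(c\,y_T)$, and the factorization $\beta_Q x_Q - \beta_R x_R = (Q^2-R^2)(Q^2+R^2-2s^2)/(4c^2)$ all check out, and the hypotheses do guarantee two-point intersections with both circles, $c>0$, and $y_R > y_Q > 0$ on the positivity region, which is all your inequality chain needs. The sign characterization is essentially the paper's own argument (via \cref{alpha-prime} and the explicit intersection coordinates $x_T, y_T$); your displayed factorization just makes the paper's ``a little algebra'' explicit. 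The ``moreover'' part is where you genuinely diverge: the paper fixes $c \leq c'$, assumes $y_R(c)-y_Q(c) < y_R(c')-y_Q(c')$, and manipulates this into $c'^2(R^2+Q^2-2s^2) < c^2(R^2+Q^2-2s^2)$, getting a contradiction from $R^2+Q^2 > 2s^2$; you instead differentiate the chord heights and show $(y_R-y_Q)' \leq 0$ on the positivity region, then integrate via the mean value theorem. Both arguments pivot on the same inequality $R^2+Q^2>2s^2$, but your differential version avoids the global two-point comparison and in fact proves slightly more (that $\gamma'$ is non-increasing, i.e.\ $\gamma$ is concave, on the positivity region), at the cost of needing the connectedness of the domain. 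On that last step, tighten the wording: what you need is that the set of admissible $c$ with $\gamma'(c)>0$ is an interval, not just the admissible set; this is true, since for fixed $s$ admissibility means $\max(Q-s,\,s-R) < c < s+R$ and positivity means $c < \sqrt{\tfrac{1}{2}(R^2+Q^2)-s^2}$, so the intersection is an interval and every point between $c$ and $c'$ satisfies your derivative bound — a one-line fix, not a gap.
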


\begin{proof}
As $\gamma'(c) = \alpha_{\outerrad}'(c) - \alpha_{\innerrad}'(c)$, 
we have $\gamma'(c) = -2y_{\outerrad}(c) + 2y_{\innerrad}(c)$ 
by \cref{alpha-prime}. 
So, 
$\gamma'(c) = 0$ if and only if $y_{\outerrad}(c) = y_{\innerrad}(c)$; and 
$\gamma'(c) > 0$ if and only if $y_{\outerrad}(c) < y_{\innerrad}(c)$; and 
$\gamma'(c) < 0$ if and only if $y_{\outerrad}(c) > y_{\innerrad}(c)$.

Now, $C(O, R)$ and $C((c, 0), s)$ intersect in two points, 
and the $x$-coordinate of both points is given by 
$x_{\innerrad}(c) = (c^2 + \innerrad^2 - s^2) / 2c$, 
and then $y_{\innerrad}(c) = \sqrt{\innerrad^2 - x_{\innerrad}(c)^2}$. 
Similarly we compute $y_{\outerrad}(c)$. 
Now, a little algebra shows that $y_{\outerrad}(c) = y_{\innerrad}(c)$ 
if and only if $c^2 + s^2 = \tfrac{1}{2}(\innerrad^2 + \outerrad^2)$; 
and $y_{\outerrad}(c) < y_{\innerrad}(c)$ if and only if 
$c^2 + s^2 < \tfrac{1}{2}(\innerrad^2 + \outerrad^2)$; 
and $y_{\outerrad}(c) > y_{\innerrad}(c)$ if and only if 
$c^2 + s^2 > \tfrac{1}{2}(\innerrad^2 + \outerrad^2)$.

To prove the last statement of the lemma, 
we must show that 
$y_{\innerrad}(c) - y_{\outerrad}(c) \geq y_{\innerrad}(c') - y_{\outerrad}(c')$. 
Towards a contradiction, assume 
$y_{\innerrad}(c) - y_{\outerrad}(c) < y_{\innerrad}(c') - y_{\outerrad}(c')$. 
Then a little more algebra shows that 
$c'^2(\innerrad^2 + \outerrad^2 - 2s^2) < c^2(\innerrad^2 + \outerrad^2 - 2s^2)$. 
As we assume $\gamma'(c) > 0$, we have
$c^2 + s^2 < \tfrac{1}{2}(\innerrad^2 + \outerrad^2)$, 
and therefore 
$s^2 < \tfrac{1}{2}(\innerrad^2 + \outerrad^2)$. 
So, we conclude $c' < c$, a contradiction.
\end{proof}

We are ready to show that the subspace $\wannulus_{(s,k)}$ is an annulus, 
for any choice of $s$ and $k$. This will follow from the next lemma.

Now, for $s > 0$, 
let $\nu \colon [0, \outerrad] \to [0, 1]$ be the function 
$\nu(c) = \mu \left(B\left((c,0), s \right)\right)$, 
where $\mu$ is the measure on $\wannulus$. 
We write $\nu_s$ if it is necessary to specify $s$. 
Since $\nu$ is a linear combination of $\gamma$ and $\alpha_{\innerrad}$, 
we have already seen how to calculate $\nu$. 
Define $\omega \colon (0, \infty) \to [0, \outerrad]$ as follows. 
For any $s > 0$, 
let $M_s = \max_{c \in [0, \outerrad]} \nu_s(c)$. 
As $\nu_s$ is continuous, $\nu_s^{-1}(M_s) \subseteq [0, \outerrad]$ 
is non-empty and closed, and we let $\omega(s) = \min(\nu_s^{-1}(M_s))$. 

\begin{lemma} \label{shape-of-nu}
For any $s > 0$, $\nu_s$ is non-decreasing on $[0, \omega(s)]$ 
and non-increasing on $[\omega(s), \outerrad]$.
\end{lemma}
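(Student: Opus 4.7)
The plan is to analyze the sign of $\nu_s'$ on $[0, \outerrad]$ and exhibit a point $\omega(s)$ such that $\nu_s' \geq 0$ on $[0, \omega(s)]$ and $\nu_s' \leq 0$ on $[\omega(s), \outerrad]$. Since $\nu_s(c) = \mu(B((c,0),s)) = (a-b)\alpha_{\innerrad}(c) + b\alpha_{\outerrad}(c)$, \cref{alpha-prime} gives
\[
\nu_s'(c) = 2(b-a)\, y_{\innerrad}(c) - 2b\, y_{\outerrad}(c),
\]
where $y_{\innerrad}(c)$ and $y_{\outerrad}(c)$ denote the $y$-coordinates of the upper intersection points of the ball boundary $C((c,0),s)$ with $C(O,\innerrad)$ and $C(O,\outerrad)$ respectively, set to $0$ when no intersection exists. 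Since $y_{\innerrad}(c)>0$ exactly on $(|\innerrad-s|,\innerrad+s)$ and $y_{\outerrad}(c)>0$ exactly on $(|\outerrad-s|,\outerrad+s)$, the sign of $\nu_s'(c)$ is immediate in regions where at most one of them is positive: $\nu_s'>0$ where only $y_{\innerrad}$ is positive, $\nu_s'<0$ where only $y_{\outerrad}$ is, and $\nu_s'=0$ where neither is. The substantive work concerns the region $D \subseteq [0, \outerrad]$ where both are positive.

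I would split the analysis of $D$ based on whether $s > (\innerrad+\outerrad)/2$ or $s \leq (\innerrad+\outerrad)/2$. In the first case, the left endpoint of $D$ is $c = s-\innerrad$, and the hypothesis yields $(s-\innerrad)^2 + s^2 \geq (\innerrad^2 + \outerrad^2)/2$, so $c^2 + s^2 \geq (\innerrad^2 + \outerrad^2)/2$ throughout $D$. By \cref{gamma-prime} this gives $y_{\innerrad}(c) \leq y_{\outerrad}(c)$ on $D$; combined with $a>0$ one obtains $\nu_s'(c) \leq -2a\,y_{\outerrad}(c) < 0$. Moreover, one checks that $y_{\innerrad}(c) = 0$ for $c$ outside $D$ in this regime, so $\nu_s'(c) = -2b\,y_{\outerrad}(c) \leq 0$ there as well. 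Hence $\nu_s$ is non-increasing on $[0,\outerrad]$ and the lemma holds with $\omega(s)=0$.

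If $s \leq (\innerrad+\outerrad)/2$, then as $c$ increases from $0$ to $\outerrad$ the sign of $\nu_s'$ follows the pattern $0, +, ?, -, 0$, where $?$ denotes $D$ (possibly empty). It suffices to show that on $D$ the ratio $\rho(c) := y_{\innerrad}(c)/y_{\outerrad}(c)$ is non-increasing in $c$. Since $\rho \to +\infty$ at the left endpoint of $D$ (where $y_{\outerrad} \to 0$ while $y_{\innerrad} > 0$) and $\rho \to 0$ at the right endpoint (where $y_{\innerrad} \to 0$ while $y_{\outerrad} > 0$), monotonicity forces $\rho$ to cross the threshold $b/(b-a)$ at most once, and so $\nu_s'$ changes sign at most once, from $+$ to $-$. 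To establish monotonicity of $\rho$, I would use $y_{\innerrad}(c)^2 = [4\innerrad^2 s^2 - (c^2 - \innerrad^2 - s^2)^2]/(4c^2)$ and the analogous formula for $y_{\outerrad}$, compute the sign of $d(\rho^2)/du$ for $u = c^2$, and reduce to showing that the quadratic $-v^2 + v(4s^2 - \innerrad^2 - \outerrad^2) - \innerrad^2 \outerrad^2$ in $v = s^2 - u$ is non-positive. Its discriminant factors as $(4s^2 - (\innerrad+\outerrad)^2)(4s^2 - (\outerrad-\innerrad)^2)$, and the hypothesis on $s$ makes the first factor non-positive; hence the discriminant is non-positive and the quadratic (with negative leading coefficient) is non-positive everywhere.

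The main obstacle is the algebraic verification of monotonicity of $\rho$ on $D$ in the second scenario; this is what forces the case split at $s = (\innerrad + \outerrad)/2$. Once $\rho$ is shown to be monotone, the rest is bookkeeping between the regions using the continuity of $\nu_s'$ to patch sign information together.
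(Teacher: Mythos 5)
Your argument is correct, and it takes a genuinely different route from the paper's for the key step. The paper splits into three cases in $s$ and, in the middle case, shows $\nu_s'$ has at most one zero on $[\outerrad-s,\sqrt{\tfrac12(\innerrad^2+\outerrad^2)-s^2}]$ by combining the sign criterion \emph{and} the monotonicity statement of \cref{gamma-prime} ($\gamma'(c)\geq\gamma'(c')$ where both are positive) with the fact that $y_{\outerrad}$ is increasing there. You instead work directly with $\nu_s'(c)=2(b-a)y_{\innerrad}(c)-2b\,y_{\outerrad}(c)$ and prove that the ratio $y_{\innerrad}/y_{\outerrad}$ is non-increasing on the two-intersection region $D$, via the explicit (Heron-type) formula $y_{\innerrad}(c)^2=[4\innerrad^2s^2-(c^2-\innerrad^2-s^2)^2]/(4c^2)$ and a discriminant computation; the large-$s$ regime $s>\tfrac12(\innerrad+\outerrad)$ you dispatch with only the sign part of \cref{gamma-prime}, using that $(s-\innerrad)^2+s^2\geq\tfrac12(\innerrad^2+\outerrad^2)$ exactly when $2s\geq\innerrad+\outerrad$. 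I checked the algebra: with $u=c^2$, $v=s^2-u$, one gets $\frac{d}{du}(y_{\innerrad}^2/y_{\outerrad}^2)$ with the sign of $2(\outerrad^2-\innerrad^2)\bigl(-v^2+v(4s^2-\innerrad^2-\outerrad^2)-\innerrad^2\outerrad^2\bigr)$, and the discriminant does factor as $(4s^2-(\innerrad+\outerrad)^2)(4s^2-(\outerrad-\innerrad)^2)$, so the threshold-crossing argument goes through. What your route buys is a self-contained, weight-uniform statement (the ratio monotonicity does not involve $a,b$ at all, so the unimodality of $\nu_s$ is manifest for every density ratio $a<b$), at the price of a more explicit algebraic computation; the paper's route leans on already-proved structural facts about $\gamma'$ and keeps the algebra to a minimum.

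One inference needs a small patch: from ``the first factor $4s^2-(\innerrad+\outerrad)^2$ is non-positive'' you conclude the discriminant is non-positive, but that requires the second factor $4s^2-(\outerrad-\innerrad)^2$ to be non-negative; for $s<\tfrac12(\outerrad-\innerrad)$ the discriminant is in fact positive. This costs nothing, because the monotonicity of $y_{\innerrad}/y_{\outerrad}$ is only invoked on $D$, and $D\neq\emptyset$ forces $2s>\outerrad-\innerrad$ (e.g.\ $y_{\innerrad}(c),y_{\outerrad}(c)>0$ give $|v+\innerrad^2|<2\innerrad s$ and $|v+\outerrad^2|<2\outerrad s$, whence $\outerrad^2-\innerrad^2<2s(\innerrad+\outerrad)$) --- but you should say so. Two further cosmetic points: the right end of your sign pattern is ``$-$'' rather than ``$0$'' (harmless, and when $D=\emptyset$ there is a flat ``$0$'' gap between the $+$ and $-$ regions instead), and your exhibited sign-change point need not literally equal $\omega(s)=\min\nu_s^{-1}(M_s)$; one extra line (unimodality forces $\nu_s$ to be constant between $\omega(s)$ and the sign-change point) recovers the statement exactly as the lemma phrases it.
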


\begin{proof}
There are three cases: 
$(1)$ $0 < s \leq \frac{1}{2}(\outerrad - \innerrad)$; 
$(2)$ $\frac{1}{2}(\outerrad - \innerrad) < s < \frac{1}{2}(\outerrad + \innerrad)$; and 
$(3)$ $\frac{1}{2}(\outerrad + \innerrad) \leq s$. 
See \cref{c-s-space} for an idea of how $\omega$ behaves in the three cases.

Case $(1)$ is straightforward. 
On $[0, \innerrad - s]$ $\nu$ is constant, 
it is strictly increasing on $[\innerrad - s, \innerrad + s]$, 
and we have $\nu(c) = M_s$ for any 
$c \in [\innerrad + s, \outerrad - s]$; so $\omega(s) = \innerrad + s$. 
On $[\outerrad - s, \outerrad]$ $\nu$ is strictly decreasing. 

We now consider Case $(2)$. 
On $[0, \innerrad - s]$ $\nu$ is constant. 
In the region bounded by lines $1\innerrad, 1\outerrad, 2\innerrad$, 
$\nu$ is strictly increasing. 
If $c$ is to the left of line $2\innerrad$, 
then $\nu$ is constant. 
The interesting case is that $c$ is to the right of line $1\outerrad$: 
$c \geq \outerrad - s$. We will show that $\omega(s)$ is in this region, 
and that $\nu' > 0$ on $[\outerrad - s, \omega(s))$, 
$\nu'(\omega(s)) = 0$, and $\nu' < 0$ on $(\omega(s), \outerrad]$. 

Let $z = \sqrt{\frac{1}{2}(\innerrad^2 + \outerrad^2) - s^2}$. 
Then by \cref{gamma-prime}, 
$\gamma'(c) < 0$ for all $c \in (z, \outerrad]$, 
and $\alpha_{\innerrad}'(c) \leq 0$, 
so $\nu'(c) < 0$. 
When $c = \outerrad - s$, so that $c$ is on line $1Q$, 
we have already seen that $\nu'(c) \geq 0$. 
So, as $\nu'$ is continuous, it must have a zero on $[\outerrad - s, z]$. 
We now show that $\nu'$ has at most one zero on this interval, 
and it follows that $\omega(s)$ is this zero.

Let $c \leq c'$ in $[\outerrad - s, z]$ such that 
$\nu'(c) = \nu'(c') = 0$. 
As $\nu = \sdense \cdot \alpha_{\innerrad} + \ldense \cdot \gamma$, 
$\nu'(c) = 0$ implies that $\gamma'(c) = \frac{-\sdense}{\ldense} \cdot \alpha_{\innerrad}'(c)$. 
We have $\gamma = \alpha_{\outerrad} - \alpha_{\innerrad}$, 
so $\nu'(c) = 0$ implies that 
$(1 - \frac{\sdense}{\ldense}) \cdot \alpha_{\innerrad}'(c) = \alpha_{\outerrad}'(c)$. 
By \cref{gamma-prime}, since $c \leq c'$ in $[\outerrad - s, z]$, 
we have $\gamma'(c) \geq \gamma'(c')$. So,
\begin{align*}
	\tfrac{-\sdense}{\ldense} \cdot \alpha_{\innerrad}'(c) &\geq 
	\tfrac{-\sdense}{\ldense} \cdot \alpha_{\innerrad}'(c') \\
	(1 - \tfrac{\sdense}{\ldense}) \cdot \alpha_{\innerrad}'(c) &\leq 
	(1 - \tfrac{\sdense}{\ldense}) \cdot \alpha_{\innerrad}'(c') \\
	\alpha_{\outerrad}'(c) &\leq \alpha_{\outerrad}'(c') \\
	y_{\outerrad}(c) &\geq y_{\outerrad}(c') .
\end{align*}
As $y_{\outerrad}$ is strictly increasing 
on $[\outerrad - s, \sqrt{\outerrad^2 - s^2}]$ 
we have $c = c'$. 
This finishes Case $(2)$. 

Case $(3)$ is straightforward; $\omega(s) = 0$. 
If $c$ is to the left of line $2Q$, or 
$c$ is in the region bounded by lines $1Q$ and $2R$, then $\nu$ is constant. 
If $c$ is in the region bounded by lines $1Q, 2R$ and $2Q$, 
then $\nu$ is strictly decreasing. 
If $c$ is to the right of line $2R$ then $\nu'(c) < 0$ 
by \cref{gamma-prime}.
\end{proof}

Note that this proof also shows how to compute $\omega(s)$. 
If $0 < s \leq \frac{1}{2}(\outerrad - \innerrad)$, 
then $\omega(s) = \innerrad + s$. 
If $\frac{1}{2}(\outerrad - \innerrad) < s < \frac{1}{2}(\outerrad + \innerrad)$, 
then $\omega(s)$ is defined implicitly by the equation 
$(\ldense - \sdense) \cdot y_{\innerrad}(c) = \ldense \cdot y_{\outerrad}(c)$. 
In this case, if $w=0$, then this last equation simplifies to 
$c^2 + s^2 = \tfrac{1}{2}(\innerrad^2 + \outerrad^2)$. 
If $\frac{1}{2}(\outerrad + \innerrad) \leq s$, $\omega(s) = 0$. 

We are almost ready to define the maps $\varphi_{\ell}$ and prove \cref{intro-theorem}. 
In order to make use of the Adamaszek--Adams calculation of the Vietoris--Rips 
complexes of the circle, we need to relate the Vietoris--Rips complexes 
of a circle with the Euclidean distance to the Vietoris--Rips complexes 
of the circle with geodesic distance. 
For this we use the following lemma, 
whose proof is straightforward.

\begin{lemma} \label{changing-the-metric}
Let $(X_1, d_1)$ and $(X_2, d_2)$ be metric spaces, 
let $D_1 = \Im(d_1) \subseteq \bbR_{\geq 0}$ and $D_2 = \Im(d_2) \subseteq \bbR_{\geq 0}$, 
and say there is a bijection $f \colon X_1 \to X_2$ 
and an order-preserving bijection $f_d \colon D_1 \to D_2$ 
such that $f_d \circ d_1 = d_2 \circ (f \times f)$. 
Then $f$ induces an isomorphism 
$\VR(X_1, d_1)(s) \iso \VR(X_2, d_2)(f_d(s))$ for any $s \in D_1$.
\end{lemma}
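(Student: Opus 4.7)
The proof will be a direct unwinding of definitions, exploiting the fact that a Vietoris--Rips complex is completely determined by which pairs of points are at distance strictly less than the scale parameter. Since $f_d$ is a bijection onto $D_2$ and $s \in D_1$, the value $f_d(s) \in D_2$ is a legitimate scale parameter with the same ``resolution'' as $s$ with respect to its metric.

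My plan is as follows. First, I would use the bijection $f \colon X_1 \to X_2$ to define a candidate vertex map for the isomorphism, and check it extends to a simplicial map both ways. Specifically, for a finite subset $\{x_0, \dots, x_n\} \subseteq X_1$, I would show that it is a simplex of $\VR(X_1, d_1)(s)$ if and only if $\{f(x_0), \dots, f(x_n)\}$ is a simplex of $\VR(X_2, d_2)(f_d(s))$. The key observation is that for each pair $(i, j)$, the values $d_1(x_i, x_j)$ and $s$ both lie in $D_1$, so the order-preserving property of $f_d$ gives
\[
    d_1(x_i, x_j) < s \iff f_d(d_1(x_i, x_j)) < f_d(s).
\]
By the hypothesis $f_d \circ d_1 = d_2 \circ (f \times f)$, the left-hand inequality rewrites as $d_2(f(x_i), f(x_j)) < f_d(s)$, which is exactly the Vietoris--Rips condition in $X_2$ at scale $f_d(s)$. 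Taking the conjunction over all pairs yields the simplex-level biconditional.

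Finally, the same argument applied to $f^{-1}$ and $f_d^{-1}$ (which is also an order-preserving bijection, from $D_2$ to $D_1$) gives a simplicial map in the reverse direction; by construction these are mutually inverse on vertices and hence define an isomorphism of simplicial complexes.

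There is no substantive obstacle here; the only point to verify with a little care is that the order-preserving bijection $f_d$ genuinely turns strict inequalities into strict inequalities in both directions, which is what its two assumed properties (bijection and order-preserving) together ensure. The hypothesis that $s \in D_1$ is essential, since otherwise $f_d(s)$ would not be defined, and the restriction to the image of the metric (rather than requiring $f_d$ on all of $\bbR_{\geq 0}$) is exactly enough because the Vietoris--Rips condition only compares $s$ to values in $D_1$.
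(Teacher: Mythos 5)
Your proof is correct: the simplex condition in a Vietoris--Rips complex is a conjunction of strict inequalities between values of the metric and the scale parameter, all lying in $D_1$, and an order-preserving bijection $f_d$ (together with its inverse) preserves such strict inequalities in both directions, so the biconditional on simplices and hence the isomorphism follow. The paper omits the proof entirely, calling it straightforward, and your argument is precisely the intended definitional unwinding.
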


For $r > 0$, we write $(S^1_r, \dg)$ for the circle of radius $r$ 
equipped with the geodesic metric, 
and $(S^1_r, \dE)$ for the circle of radius $r$ 
equipped with the Euclidean metric. 
Define $\sigma_r \colon [0, 2r] \to [0, \pi r]$ by 
$\sigma_r(t) = 2r \arcsin \left(\frac{t}{2r}\right)$. 
If $p,q \in S^1_r$, then 
$\sigma_r(\dE(p,q)) = \dg(p,q)$.

By Adamaszek--Adams \cite[Theorem 7.4]{adamaszek-adams-published}, 
\[
	\VR(S^1_{\frac{1}{2\pi}}, \dg)(s) \equiv S^{2\ell+1} \text{ \; for \; } 
	\tfrac{\ell}{2\ell + 1} < s \leq \tfrac{\ell + 1}{2\ell + 3}, \; \ell = 0, 1, \dots .
\]
And, if $\frac{\ell}{2\ell + 1} < s \leq s' \leq \frac{\ell + 1}{2\ell + 3}$, 
then the inclusion $\VR(S^1_{\frac{1}{2\pi}}, \dg)(s) \into \VR(S^1_{\frac{1}{2\pi}}, \dg)(s')$ 
is a homotopy equivalence.

In order to define the maps $\varphi_{\ell}$, 
we need to find, for any Vietoris--Rips parameter value $s > 0$, 
the radius $r$ such that $\VR(S^1_{r}, \dE)(s)$ 
is isomorphic to $\VR(S^1_{\frac{1}{2\pi}}, \dg)(\tfrac{\ell}{2\ell+1})$. 
So, for any integer $\ell > 0$, 
let $\rho_{\ell} \colon (0, \infty) \to (0, \infty)$ be defined by 
\[
	\rho_{\ell}(s) = \frac{s}{2\sin(\frac{\pi \ell}{2\ell + 1})} \; .
\]
Then, for any $s > 0$ we have
\[
	\tfrac{\ell}{2\ell + 1} = 
	\frac{\sigma_{\rho_{\ell}(s)} (s)}{2 \pi \rho_{\ell} (s)} \, ,
\]
and therefore by \cref{changing-the-metric}, 
we have
\[
	\VR(S^1_{\rho_{\ell}(s)}, \dE)(s) \iso 
	\VR(S^1_{\frac{1}{2\pi}}, \dg)(\tfrac{\ell}{2\ell+1}) \; .
\]
Similarly, define $\rho_{\infty} \colon (0, \infty) \to (0, \infty)$ 
by $\rho_{\infty}(s) = s / 2$.

We can now define the maps $\varphi_{\ell} \colon (0, \infty) \to [0,1]$ 
for $\ell = 0, 1, 2, \dots, \infty$. 
For the case $\ell = 0$, we let $\varphi_0(s) = \nu_s(\omega(s)) = M_s$. 
For $\ell > 0$, let
\[
	\varphi_{\ell}(s) = \nu_s\left(\min\left(\rho_{\ell}(s), \omega(s)\right)\right) \; .
\]
Note that, by \cref{shape-of-nu}, 
for any $s > 0$ and $0 \leq \ell < \ell' \leq \infty$, 
we have $\varphi_{\ell}(s) \geq \varphi_{\ell'}(s)$.

\begin{proof}[Proof of \cref{intro-theorem}]
Write $\wann = \wannulus$. 
If $k > \varphi_0 (s) = M_s$, 
then $\wann_{(s, k)} = \emptyset$, 
so that $\DR(\wann)(s, k) = \emptyset$. 
Next, we show that if $\wann_{(s, k)}$ is non-empty, 
then it is an annulus. 
Now, 
\begin{align*}
	\wann_{(s, k)} &= \{ p \in \wann \mid \mu(B(p, s)) \geq k \} \\
	&= (\nu_s \circ || - ||)^{-1} \left([k, 1]\right)
\end{align*}
which is closed as $\nu_s$ and $|| - ||$ are continuous. 
It suffices to show that 
$\nu_s^{-1} \left([k, 1]\right) \subset [0, \outerrad]$ is an interval, 
and this follows from \cref{shape-of-nu}. 

Now, say that 
$0 < \ell < \infty$ and $s > 0$ and $k \in [0,1]$ are such that  
$\varphi_{\ell}(s) > k > \varphi_{\ell+1}(s)$. 
Let $P$ be the left endpoint of the interval $\nu_s^{-1} \left([k, 1]\right)$, 
so that $\wann_{(s, k)}$ is an annulus with inner radius $P$. 
We show now that $\rho_{\ell+1}(s) < P < \rho_{\ell}(s)$.

As $\varphi_{\ell}(s) \neq \varphi_{\ell+1}(s)$ 
and $\rho_{\ell}(s) > \rho_{\ell+1}(s)$, 
we have 
$\rho_{\ell+1}(s) < \omega(s)$ and 
$\varphi_{\ell+1}(s) = \nu_s(\rho_{\ell+1}(s))$. 
As $k > \varphi_{\ell+1}(s) = \nu_s(\rho_{\ell+1}(s))$ 
we have $\rho_{\ell+1}(s) \notin \nu_s^{-1} \left([k, 1]\right)$; 
as $\omega(s) \in \nu_s^{-1} \left([k, 1]\right)$ 
and $\rho_{\ell+1}(s) < \omega(s)$, we have 
$\rho_{\ell+1}(s) < P$, as desired. 
By continuity of $\nu_s$, 
there is $r \in (\rho_{\ell+1}(s), P]$ 
with $\nu_s(r) = k$. By definition of $P$, 
we have $P \leq r$, and thus $P = r$ and $\nu_s(P) = k$. 
Since $\varphi_{\ell}(s) > k$, we have $P < \rho_{\ell}(s)$.

Now, by \cref{VR-of-annulus}, 
the inclusion $S^1_P \into \wann_{(s, k)}$ induces a homotopy equivalence
$\VR(S^1_P)(s) \equiv \VR(\wann_{(s, k)})(s)$. 
By \cref{changing-the-metric}, 
$\VR(S^1_P)(s) \iso \VR(S^1_{\frac{1}{2\pi}}, \dg)(\frac{\sigma_{P} (s)}{2 \pi P})$.

As $\rho_{\ell+1}(s) < P < \rho_{\ell}(s)$, 
we have
\[
	\tfrac{\ell+1}{2\ell + 3} = 
	\frac{\sigma_{\rho_{\ell+1}(s)} (s)}{2 \pi \rho_{\ell+1} (s)} > 
	\frac{\sigma_{P} (s)}{2 \pi P} >
	\frac{\sigma_{\rho_{\ell}(s)} (s)}{2 \pi \rho_{\ell} (s)} = 
	\tfrac{\ell}{2\ell + 1}
\]
So that 
$\DR(\wann)(s,k) = \VR(\wann_{(s, k)})(s) \equiv \VR(S^1_P)(s) \equiv S^{2\ell+1}$ 
by \cite[Theorem 7.4]{adamaszek-adams-published}.

If $s$ and $k$ are such that $\varphi_{\infty}(s) > k$, 
then we have seen that 
$\wann_{(s, k)}$ is an annulus, and again we write $P$ for the inner radius. 
Then one checks that $P < \rho_{\infty}(s) = s / 2$, 
and so $\VR(S^1_P)(s)$ is contractible.

The claim that inclusions 
$\DR(\wann)(s,k) \into \DR(\wann)(s',k')$ 
are homotopy equivalences whenever $(s,k)$ and $(s',k')$ 
both lie between $\varphi_{\ell}$ and $\varphi_{\ell+1}$ follows from
\cref{VR-of-annulus} 
and the statement in \cite[Theorem 7.4]{adamaszek-adams-published} 
about inclusions of Vietoris--Rips complexes.
\end{proof}

\section{The annulus without outliers}
\label{the-annulus-without-outliers}

We now consider the case $w = 0$, 
when the measure of the inner disc $\{ p \in \bbR^2 : || p || < \innerrad \}$ is zero. 
The measure of an $s$-ball $B(p, s)$ is not much changed from the case 
where $w$ is small but non-zero. 
However, the degree-Rips complexes of $\wannuluszero$ 
exhibit different behavior from the case $w > 0$, 
because now the vertices of the degree-Rips complexes 
must lie in the annulus $A_{\innerrad, \outerrad}$. 
In this section, we modify the constructions of \cref{boundary-curves} 
accordingly, and then prove the analogue of \cref{intro-theorem} in this case.

For $s > 0$, let $\tilde{\nu_s} \colon [\innerrad, \outerrad] \to [0,1]$ 
be defined by $\tilde{\nu_s}(c) = \mu \left(B\left((c,0), s\right)\right)$. 
Define $\tilde{\omega} \colon (0, \infty) \to [\innerrad, \outerrad]$ as follows. 
For any $s > 0$, 
let $M_s = \max_{c \in [\innerrad, \outerrad]} \tilde{\nu_s}(c)$. 
As $\tilde{\nu_s}$ is continuous, $\tilde{\nu_s}^{-1}(M_s) \subseteq [\innerrad, \outerrad]$ 
is non-empty and closed, and we let $\tilde{\omega}(s) = \min(\tilde{\nu_s}^{-1}(M_s))$.

As before, $\tilde{\varphi_0} \colon (0, \infty) \to [0,1]$ 
is defined as $\tilde{\varphi_0}(s) = M_s = \tilde{\nu_s}(\tilde{\omega}(s))$. 
But now, for $0 < \ell \leq \infty$, 
we define 
$\tilde{\varphi_{\ell}} \colon (0, \infty) \to [0,1]$ by
\[
	\tilde{\varphi_{\ell}}(s) =  
	\begin{cases}
	0 & \text{ \; if \; } \rho_{\ell}(s) \leq \innerrad \\
	\tilde{\nu_s}\left(\min\left(\rho_{\ell}(s), \tilde{\omega}(s)\right)\right) 
	& \text{ \; else}
	\end{cases}
\]
Note that the $\tilde{\varphi_{\ell}}$ need not be continuous.

\begin{theorem}
For any $s > 0$ and any $k \in [0,1]$,
\[
	\DR(\wannuluszero)(s,k) \equiv 
	\begin{cases}
	\emptyset & \text{ \; if \; } k > \tilde{\varphi_0}(s) \\
	S^{2\ell+1} & \text{ \; if \; } 
	\tilde{\varphi_{\ell}}(s) > k > \tilde{\varphi_{\ell+1}}(s) 
	\text{ \; for } \ell \neq \infty \\
	* & \text{ \; if \; } \tilde{\varphi_{\infty}}(s) > k
	\end{cases}
\]
Moreover, if $0 < \ell < \infty$ and $0 < s \leq s'$ and $0 \leq k' \leq k \leq 1$ 
are such that
\[
	\tilde{\varphi_{\ell}}(s) > k > \tilde{\varphi_{\ell+1}}(s) \text{ \; and \; } 
	\tilde{\varphi_{\ell}}(s') > k' > \tilde{\varphi_{\ell+1}}(s') \, ,
\]
then the inclusion 
$\DR(\wannuluszero)(s,k) \into \DR(\wannuluszero)(s',k')$ 
is a homotopy equivalence.
\end{theorem}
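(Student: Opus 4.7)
\emph{Overall strategy.} The plan is to follow the proof of \cref{intro-theorem} closely, adapting each ingredient to the fact that the underlying metric space of $\wannuluszero$ is the annulus $A_{\innerrad, \outerrad}$ itself, so that $\wannuluszero_{(s, k)} \subseteq A_{\innerrad, \outerrad}$ for every $(s,k)$. The new feature is that the $\tilde{\varphi_\ell}$ are allowed to be discontinuous, and this is engineered precisely for the regime $\rho_\ell(s) \leq \innerrad$, where the inner radius $P$ of $\wannuluszero_{(s,k)}$ is forced to be at least $\innerrad$ and hence cannot lie strictly below $\rho_\ell(s)$.

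\emph{Shape of $\tilde{\nu}_s$.} The first step is to establish the analogue of \cref{shape-of-nu}: $\tilde{\nu}_s$ is non-decreasing on $[\innerrad, \tilde{\omega}(s)]$ and non-increasing on $[\tilde{\omega}(s), \outerrad]$. Since $w = 0$, $\tilde{\nu}_s$ is a positive constant multiple of $\gamma$ on $[\innerrad, \outerrad]$, so the case analysis of \cref{shape-of-nu}, together with \cref{gamma-prime}, applies after restriction to $c \in [\innerrad, \outerrad]$. As a consequence, by rotational symmetry, $\wannuluszero_{(s,k)}$ is either empty or an annulus $A_{P, P'}$ with $\innerrad \leq P \leq P' \leq \outerrad$, where $[P,P'] = \tilde{\nu}_s^{-1}([k,1])$.

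\emph{Locating $P$.} If $k > \tilde{\varphi_0}(s) = M_s$ then $\wannuluszero_{(s,k)} = \emptyset$. Now suppose $0 < \ell < \infty$ and $\tilde{\varphi_\ell}(s) > k > \tilde{\varphi_{\ell+1}}(s)$, and write $P$ for the inner radius of $\wannuluszero_{(s,k)}$. I would show that $\rho_{\ell+1}(s) \leq P < \rho_\ell(s)$, which suffices to land in the Adamaszek--Adams range $(\ell/(2\ell+1), (\ell+1)/(2\ell+3)]$ for $\sigma_P(s)/(2\pi P)$. From $\tilde{\varphi_\ell}(s) > 0$ we have $\rho_\ell(s) > \innerrad$, so on this side the definition of $\tilde{\varphi_\ell}$ coincides with that of $\varphi_\ell$, and the argument of \cref{intro-theorem} yields $P < \rho_\ell(s)$ verbatim. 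For the lower bound there are two subcases: if $\rho_{\ell+1}(s) > \innerrad$, then $\tilde{\varphi_{\ell+1}}(s) = \tilde{\nu}_s(\rho_{\ell+1}(s))$ and the \cref{intro-theorem} argument gives $P > \rho_{\ell+1}(s)$; if $\rho_{\ell+1}(s) \leq \innerrad$, then $P \geq \innerrad \geq \rho_{\ell+1}(s)$ automatically, and in the extremal subcase $P = \rho_{\ell+1}(s) = \innerrad$ one has $\sigma_P(s)/(2\pi P) = (\ell+1)/(2\ell+3)$, which still lies in the Adamaszek--Adams range because the right endpoint is inclusive.

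\emph{Conclusion and main obstacle.} Once $\rho_{\ell+1}(s) \leq P < \rho_\ell(s)$ is in place, \cref{VR-of-annulus} gives $\DR(\wannuluszero)(s,k) \equiv \VR(S^1_P)(s)$, and \cref{changing-the-metric} transports this to $\VR(S^1_{1/(2\pi)}, \dg)(\sigma_P(s)/(2\pi P))$, whose homotopy type is $S^{2\ell+1}$ by the Adamaszek--Adams theorem. The case $\tilde{\varphi_\infty}(s) > k$ similarly reduces to checking $P < s/2$ and contractibility, and the statement about inclusions being homotopy equivalences follows from the corresponding statement in Adamaszek--Adams, as in the proof of \cref{intro-theorem}. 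The main obstacle I anticipate is the careful case analysis at the discontinuities of the $\tilde{\varphi_\ell}$, namely at the values of $s$ where $\rho_\ell(s) = \innerrad$; one must verify that the convention setting $\tilde{\varphi_\ell}(s) = 0$ in this regime correctly eliminates precisely the parameter region where the inequality $P < \rho_\ell(s)$ would fail, so that the statement is vacuous in that region and the above argument covers every other case.
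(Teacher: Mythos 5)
Your proposal is correct and takes essentially the same route as the paper: the paper's proof simply says ``arguing as before'' to get that $\wannuluszero_{(s,k)}$ is an annulus with inner radius $P$ satisfying $\rho_{\ell+1}(s) \leq P < \rho_{\ell}(s)$, and then uses that the Adamaszek--Adams interval is closed on the right to absorb the possible equality $P = \rho_{\ell+1}(s)$. Your case split at $\rho_{\ell+1}(s) \lessgtr \innerrad$ and your observation that $\tilde{\varphi_{\ell}}(s) > k > 0$ forces $\rho_{\ell}(s) > \innerrad$ (making the statement vacuous where the convention $\tilde{\varphi_{\ell}} = 0$ applies) are exactly the details the paper leaves implicit.
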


\begin{proof}
The proof is quite similar to the proof of \cref{intro-theorem}. 
If $s > 0$ and $k \in [0,1]$ are such that 
$\tilde{\varphi_{\ell}}(s) > k > \tilde{\varphi_{\ell+1}}(s)$, 
then, arguing as before, 
$\wannuluszero_{(s,k)}$ is an annulus with inner radius $P$ such that 
$\rho_{\ell+1}(s) \leq P < \rho_{\ell}(s)$. 
Therefore,
\[
	\tfrac{\ell+1}{2\ell + 3} = 
	\frac{\sigma_{\rho_{\ell+1}(s)} (s)}{2 \pi \rho_{\ell+1} (s)} \geq 
	\frac{\sigma_{P} (s)}{2 \pi P} >
	\frac{\sigma_{\rho_{\ell}(s)} (s)}{2 \pi \rho_{\ell} (s)} = 
	\tfrac{\ell}{2\ell + 1}
\]
So that
\[
	\DR(\wannuluszero)(s,k) = 
	\VR(\wannuluszero_{(s, k)})(s) 
	\equiv \VR(S^1_P)(s) \equiv S^{2\ell+1}
\]
again by \cite[Theorem 7.4]{adamaszek-adams-published}. 
The claim about inclusions of degree-Rips complexes is proved in the same way as before.
\end{proof}

\section{Conclusions}

In various experiments, and in this paper, we have observed the following behavior. 
If there is a strong topological signal in data, 
and this appears somewhere in the parameter space of degree-Rips, 
then if one adds outliers, the topological signal is still visible 
(i.e., prominent) 
in degree-Rips, but at a different location in the parameter space, 
where the values of the Rips parameter are smaller.

The main interest of the calculation in this paper is that, in this case, 
it is possible to say precisely how the location of the signal changes 
in the degree-Rips parameter space. 
We now briefly mention one reason why we would like to understand this in more general settings. 
If one is interested in taking one-parameter slices of degree-Rips 
(e.g., for computing a barcode, or for clustering as in 
robust single-linkage \cite{chaudhuri-dasgupta-10} 
or $\gamma$-linkage \cite{rolle-scoccola}), 
then choosing the slice is tricky in practice. 
But it seems that, both for computational reasons and to maximize robustness, 
one wants to choose a slice through ``small'' values of the Rips parameter. 
A satisfactory understanding of the robustness of degree-Rips may shed light on this.

There are several directions in which one could try to extend the results of this paper. 
Of course it would be interesting to consider measures supported on more complicated spaces, 
perhaps seeking only partial calculations or approximations. 
One could also consider other models for outliers. 
For example, one could take a convolution with a kernel 
(as in \cite[Section 2.1]{rinaldo-wasserman}), 
rather than mixing with a uniform measure. 
Finally, a reviewer posed the following question: 
is there a density on the disc that is rotationally invariant and monotone in the radius 
such that the uniform filtration at some parameter $(s,k)$ is not an annulus? 

\bibliography{degree-Rips_annulus}

\begin{thebibliography}{10}

\bibitem{adamaszek-adams-published}
Micha{\l} Adamaszek and Henry Adams.
\newblock The {V}ietoris--{R}ips complexes of a circle.
\newblock {\em Pac. J. Math}, 290(1):1--40, 2017.
\newblock \href {https://doi.org/10.2140/pjm.2017.290.1}
  {\path{doi:10.2140/pjm.2017.290.1}}.

\bibitem{attali-lieutier-salinas}
Dominique Attali, André Lieutier, and David Salinas.
\newblock Vietoris–{R}ips complexes also provide topologically correct
  reconstructions of sampled shapes.
\newblock {\em Computational Geometry}, 46(4):448--465, 2013.
\newblock 27th Annual Symposium on Computational Geometry (SoCG 2011).
\newblock \href {https://doi.org/https://doi.org/10.1016/j.comgeo.2012.02.009}
  {\path{doi:https://doi.org/10.1016/j.comgeo.2012.02.009}}.

\bibitem{bcehm-inferring}
Paul Bendich, David {Cohen-Steiner}, Herbert Edelsbrunner, John Harer, and
  Dmitriy Morozov.
\newblock Inferring local homology from sampled stratified spaces.
\newblock In {\em 48th Annual {IEEE} Symposium on Foundations of Computer
  Science {(FOCS}2007), October 20-23, 2007, Providence, RI, USA, Proceedings},
  pages 536--546. {IEEE} Computer Society, 2007.

\bibitem{blumberg-lesnick}
Andrew~J. Blumberg and Michael Lesnick.
\newblock Stability of 2-parameter persistent homology.
\newblock arXiv:2010.09628, 2020.

\bibitem{bobrowski-mukherjee-taylor}
Omer Bobrowski, Sayan Mukherjee, and Jonathan~E. Taylor.
\newblock Topological consistency via kernel estimation.
\newblock {\em Bernoulli}, 23(1):288--328, 2017.
\newblock \href {https://doi.org/10.3150/15-BEJ744}
  {\path{doi:10.3150/15-BEJ744}}.

\bibitem{boissonnat_chazal_yvinec}
Jean-Daniel Boissonnat, Frédéric Chazal, and Mariette Yvinec.
\newblock {\em Geometric and Topological Inference}.
\newblock Cambridge Texts in Applied Mathematics. Cambridge University Press,
  2018.
\newblock \href {https://doi.org/10.1017/9781108297806}
  {\path{doi:10.1017/9781108297806}}.

\bibitem{campello-moulavi-sander}
Ricardo J. G.~B. Campello, Davoud Moulavi, and J\"{o}rg Sander.
\newblock Density-based clustering based on hierarchical density estimates.
\newblock In {\em Advances in Knowledge Discovery and Data Mining}, volume 7819
  of {\em Lecture Notes in Computer Science}, pages 160--172. Springer, 2013.

\bibitem{chaudhuri-dasgupta-10}
Kamalika Chaudhuri and Sanjoy Dasgupta.
\newblock Rates of convergence for the cluster tree.
\newblock In J.~D. Lafferty, C.~K.~I. Williams, J.~Shawe-Taylor, R.~S. Zemel,
  and A.~Culotta, editors, {\em Advances in Neural Information Processing
  Systems 23}, pages 343--351. Curran Associates, Inc., 2010.

\bibitem{chazal-guibas-oudot-skraba}
Fr\'{e}d\'{e}ric Chazal, Leonidas~J. Guibas, Steve~Y. Oudot, and Primoz Skraba.
\newblock Persistence-based clustering in {R}iemannian manifolds.
\newblock {\em J. ACM}, 60(6), November 2013.
\newblock \href {https://doi.org/10.1145/2535927} {\path{doi:10.1145/2535927}}.

\bibitem{cuevas-febrero-fraiman}
Antonio Cuevas, Manuel Febrero, and Ricardo Fraiman.
\newblock Estimating the number of clusters.
\newblock {\em Canadian Journal of Statistics}, 28:367--382, 2000.

\bibitem{dbscan}
Martin Ester, Hans-Peter Kriegel, Jörg Sander, and Xiaowei Xu.
\newblock A density-based algorithm for discovering clusters in large spatial
  databases with noise.
\newblock In {\em {KDD'96}: {P}roceedings of the Second International
  Conference on Knowledge Discovery and Data Mining}, pages 226--231. AAAI
  Press, 1996.

\bibitem{hausmann}
Jean-Claude {Hausmann}.
\newblock {On the {V}ietoris--{R}ips complexes and a cohomology theory for
  metric spaces}.
\newblock In {\em Prospects in topology. Proceedings of a conference in honor
  of William Browder, Princeton, NJ, USA, March 1994}, pages 175--188.
  Princeton, NJ: Princeton University Press, 1995.

\bibitem{jardine-stable-components}
J.~F. Jardine.
\newblock Stable components and layers.
\newblock {\em Canad. Math. Bull.}, page 1–15, 2019.
\newblock \href {https://doi.org/10.4153/S000843951900064X}
  {\path{doi:10.4153/S000843951900064X}}.

\bibitem{jardine-persistent-homotopy}
J.~F. Jardine.
\newblock Persistent homotopy theory.
\newblock arXiv:2002.10013, 2020.

\bibitem{lesnick-wright}
Michael Lesnick and Matthew Wright.
\newblock Interactive visualization of 2-{D} persistence modules.
\newblock arXiv:1512.00180, 2015.

\bibitem{mcinnes-healy}
Leland McInnes and John Healy.
\newblock Accelerated hierarchical density based clustering.
\newblock In {\em 2017 IEEE International Conference on Data Mining Workshops
  (ICDMW)}, volume~00, pages 33--42, Nov. 2018.
\newblock \href {https://doi.org/10.1109/ICDMW.2017.12}
  {\path{doi:10.1109/ICDMW.2017.12}}.

\bibitem{niyogi-smale-weinberger}
Partha Niyogi, Stephen Smale, and Shmuel Weinberger.
\newblock Finding the homology of submanifolds with high confidence from random
  samples.
\newblock {\em Discrete Comput. Geom.}, 39:419--441, 2008.
\newblock \href {https://doi.org/10.1007/s00454-008-9053-2}
  {\path{doi:10.1007/s00454-008-9053-2}}.

\bibitem{zigzag-zoology}
Steve~Y. Oudot and Donald~R. Sheehy.
\newblock Zigzag zoology: {R}ips zigzags for homology inference.
\newblock {\em Found Comput Math}, 15(5):1151--1186, 2015.
\newblock \href {https://doi.org/10.1007/s10208-014-9219-7}
  {\path{doi:10.1007/s10208-014-9219-7}}.

\bibitem{rinaldo-wasserman}
Alessandro Rinaldo and Larry Wasserman.
\newblock Generalized density clustering.
\newblock {\em Ann. Statist.}, 38(5):2678--2722, 10 2010.
\newblock \href {https://doi.org/10.1214/10-AOS797}
  {\path{doi:10.1214/10-AOS797}}.

\bibitem{rolle-scoccola}
Alexander Rolle and Luis Scoccola.
\newblock Stable and consistent density-based clustering.
\newblock arXiv:2005.09048, 2020.

\bibitem{scoccola-thesis}
Luis Scoccola.
\newblock Locally persistent categories and metric properties of interleaving
  distances.
\newblock \textit{Electronic Thesis and Dissertation Repository}.
  \texttt{https://ir.lib.uwo.ca/etd/7119}, 2020.

\bibitem{rivet}
{The RIVET Developers}.
\newblock {RIVET}.
\newblock 1.1.0, 2020.
\newblock URL: \url{https://github.com/rivetTDA/rivet/}.

\end{thebibliography}

\end{document}